\theoremstyle{plain}
\numberwithin{equation}{section} \numberwithin{figure}{section}
\newtheorem{theorem}{Theorem}[section]
\newtheorem{lemma}[theorem]{Lemma}
\newtheorem{proposition}[theorem]{Proposition}
\newtheorem{corollary}[theorem]{Corollary}
\newtheorem{definition}[theorem]{Definition}
\theoremstyle{definition}
\newtheorem{remark}[theorem]{Remark}
\numberwithin{equation}{section}
\begin{document}

\markboth{Pablo Ochoa and Analia Silva}
{Magnetic fractional p-Laplacian}

%

%

\title[Biharmonic $g$-Laplacian]{Nonlinear eigenvalue problems for a biharmonic operator in Orlicz-Sobolev spaces}

\author{Pablo Ochoa and Anal\'ia Silva}

\address[P.Ochoa]{Universidad Nacional de Cuyo-CONICET, Parque Gral. San Mart\'in\\
Mendoza, 5500, Argentina.\\
ochopablo@gmail.com}

\address[A.Silva]{Departamento de Matem\'atica, FCFMyN, Universidad Nacional de San Luis and Instituto de Matem\'atica Aplicada San luis (IMASL),
UNSL-CONICET. Ejercito de los Andes
950, D5700HHW,San Luis,
Argentina\\acsilva@unsl.edu.ar\\analiasilva.weebly.com}

\subjclass[2020]{46E30, 35P30, 35D30}

\keywords{Orlicz-Sobolev spaces, biharmonic Laplacian, Nonlinear eigenvalue problem}
\maketitle
\begin{abstract}
In this paper, we introduce a new higher-order Laplacian operator in the framework of Orlicz-Sobolev spaces, the biharmonic g-Laplacian
$$\Delta_g^2 u:=\Delta \left(\dfrac{g(|\Delta u|)}{|\Delta u|} \Delta u\right),$$
where $g=G'$, with $G$ an N-function. This operator is a generalization of the so called bi-harmonic Laplacian $\Delta^2$. Here, we also established  basic functional properties of $\Delta_g^2$, which can be applied to  existence results. Afterwards, we study the eigenvalues of $\Delta_g^2$, which depend on normalisation conditions, due to the lack of homogeneity of the operator. Finally, we study different nonlinear eigenvalue problems associated to $\Delta_g^2$ and we show  regimes where the corresponding spectrum concentrate at $0$, $\infty$ or coincide with $(0, \infty)$. 
\end{abstract}

\section{Introduction}
In this paper, we introduce a new higher-order operator in the framework of Orlicz-Sobolev spaces that generalizes the well-known biharmonic Laplacian:
$$\Delta^2u=\Delta(\Delta u).$$Given $\Omega$ a bounded domain of $\mathbb{R}^n$ and  a function $u:\Omega\to \mathbb{R}$, we formally define  the biharmonic $g$-Laplacian as
$$\Delta_g^2 u= \Delta \left(\dfrac{g(|\Delta u|)}{|\Delta u|} \Delta u\right),$$
where $g=G'$, with $G$ an N-function, that is, $G:\mathbb{R}\to [0,\infty)$ is even and is given by
$$G(t)=\int_0^t g(s)\,ds,$$where $g$ is odd,  continuous, non-decreasing, $g(0)=0$, $g(t)>0$ for $t>0$, and $g(t)\to \infty$  as $t\to \infty$.

In this article, we will discuss basic properties of $\Delta_g^2$ related to the solvability and uniqueness of elliptic problems driven by this operator. Among them, we shall prove  existence and continuity of the derivative of the energy functional $L$ associated to $\Delta_g^2$. Monotonicity properties, such as the $(S_+)$-condition, together with the continuity of the inverse of the derivative $L'$ will also be established. As it is well-known in the literature, this list of properties may be applied to study several problems related to $\Delta_g^2$.

 Moreover, we shall also consider the existence of eigenvalues and eigenfunctions and their relation to variational problems. As it is known in this non-standard framework (see for instance \cite{GH}, \cite{MT}, \cite{YM}), eigenvalues are not variational, that is, they may not be characterized by taking infimun of Rayleigh quotients. This is due in general to the lack of homogeneity of $N$-functions and, as a particular feature of eigenvalue problems in Orlicz spaces, the solvability of the eigenvalue problems depends on normalization conditions. To illustrate this observation,  in the case of the $g$-Laplacian operator, it has been proved  in \cite{MT} that the constrained minimization problem
 $$\inf\left\lbrace \int_\Omega G(|\nabla u|)\,dx, \int_\Omega G(u)\,dx=r \right\rbrace$$has a solution $u_r\neq 0$ which is a weak solution of
 \begin{equation}\label{problem eigenvalue intro2}
\begin{cases}-\Delta_g u = \lambda_r \dfrac{g(|u|)}{|u|}u,\quad  \text{in }\Omega\\
u=0\qquad \normalcolor\text{ on }\partial \Omega,
\end{cases} 
\end{equation}where $g=G'$, for some $\lambda_r\in \mathbb{R}$. We point out that the eigenvalues depend on the normalization condition
$$\int_\Omega G(u)\,dx=r.$$
Observe that when $G(t)=|t|^p$, $p>1$, problem \eqref{problem eigenvalue intro2} becomes the more familiar eigenvalue problem for the $p$-Laplacian:
\begin{equation*}
\begin{cases}-\Delta_p u = \lambda |u|^{p-2}u,\quad  \text{in }\Omega\\
u=0\qquad \normalcolor\text{ on }\partial \Omega,
\end{cases} 
\end{equation*}where $\Delta_p u:= div(|\nabla u|^{p-2}\nabla u)$. See for instance \cite{Lind} and \cite{Lind2}, and the references therein.

In  \cite{T}, the author  proved the existence of an infinite sequence of eigenvalues for \eqref{problem eigenvalue intro2} tending to infinity without any extra condition on $G$ (no $\Delta_2$-condition is assumed). There, the classical Ljusternik-Schnirelmann theory is not available due to nonreflexivity of the underlying Orlicz-Sobolev space.  Applications of Mathematical Programming to eigenvalue problems in Orlicz-Sobolev spaces can be seen in \cite{GM}.   More recently, the isolation of the principal eigenvalue $\lambda_1$ for the $g$-Laplacian has been considered in \cite{MLo} (see also \cite{YM} for further properties of the eigenfunctions).

   In this setting, our contributions are the following: given the nonlinear eigenvalue problem in Orlicz-Sobolev spaces
 \begin{equation}\label{problem eigenvalue intro}
\begin{cases}\Delta_g^2 u = \lambda \dfrac{b(|u|)}{|u|}u,\qquad  \text{in }\Omega\\
u\in W^{2, G}_0(\Omega),
\end{cases} 
\end{equation}where $g=G', b=B'$, with $B$ and $G$  two $N$-functions, we prove:
 
 \begin{itemize}
 \item[(i)] When $G=B$, and for each normalization constraint
 $$\int_\Omega G(u)\,dx=r, \,\,r>0,$$there is $\lambda_r>0$ and $u_r\neq 0$ solving \eqref{problem eigenvalue intro}. Moreover, any eigenvalue $\lambda_r$ of \eqref{problem eigenvalue intro} is necessary greater or equal than the following infimum of \textit{Rayleigh quotients}
 $$\lambda_0=\inf_{u\neq 0}\dfrac{\int_\Omega g(|\Delta u|)|\Delta u|\,dx}{\int_\Omega g(| u|)| u|\,dx}.$$In our framework, it is an open question  if $\lambda_0$ is indeed an eigenvalue of \eqref{problem eigenvalue intro}, even for domains with small diameters (compare to Theorem 3.3 in \cite{GH} where thanks to a Poincar\'e inequality the corresponding $\lambda_0$ is positive).
 \item[(ii)] The spectrum is the whole interval $(0, \infty)$ in two cases. The first one, is when $B$ grows much faster than $G$ near $\infty$, much slower than $G$ near $0$, but it is still subcritical (in the sense of the embedding between Orlicz-Sobolev and Orlicz spaces). This is done by showing that the energy funcional associated with \eqref{problem eigenvalue intro} has a non-trivial global minimum. See Theorem \ref{eigenvalue3}.
  The second case, is when $B$ grows much slowly than $G$  near $\infty$ and much faster near $0$. Here, the coerciveness of the energy functional is essential to find a non-trivial critical point. See Theorem \ref{eigenvalueinfty}.

 \item[(iii)] When $G$ grows much slowly than $B$ around $0$, the spectrum is concentrated around $0$, that is, there is $\lambda^{*}>0$ such that any $\lambda\in (0, \lambda^{*})$ is an eigenvalue of \eqref{problem eigenvalue intro}. This is done by showing that the associated energy functional has a mountain pass geometry, but since no Ambrosetti-Rabinowitz condition is satisfied, the conclusion is obtained by the Ekeland's variational principle and functional properties of the main operator $\Delta_g^2$. We refer to Theorem \ref{eigenvalue}.
 
 \item [(iv)] Finally, when $B$ grows much slowly than $G$ near $0$, then the spectrum is concentrated around $\infty$, that is, there is $\lambda^{**}>0$ such that any $\lambda>\lambda^{**}$ is an eigenvalue of \eqref{problem eigenvalue intro}.   We refer to Theorem \ref{eigenvalueinfty2}.
 \end{itemize}

%
  
  The paper is organized as follows. In Section \ref{prelim}, we give definitions and useful results in Orlicz and Orlicz-Sobolev spaces. In section \ref{weak} , we motivate and discuss the definition of weak solutions to  Dirichlet problem involving the biharmonic $g$-Laplacian. Next, in Section \ref{prop}, we present some functional properties  for  $\Delta_g^2$. In Section \ref{g}, we studied the eigenvalue problem \eqref{problem eigenvalue intro2} and, afterwards, in Section \ref{b}, we analize the behaviour of the spectrum of problem \eqref{problem eigenvalue intro} under different regimes.

\section{Preliminaries}\label{prelim}

In this section we introduce basic definitions and preliminary results related to Orlicz spaces. We start recalling the definition of an N-function.
	\begin{definition}\label{d2.1}
		A function $G \colon [0, \infty) \rightarrow \mathbb{R}$ is called an N-function if it admits the representation
		$$G(t)= \int _{0} ^{t} g(\tau) d\tau,$$
		where the function $g$ is right-continuous for $t \geq 0$,  positive for $t >0$, non-decreasing and satisfies the conditions
		$$g(0)=0, \quad g(\infty)=\lim_{t \to \infty}g(t)=\infty.$$
		\end{definition}By \cite[Chapter 1]{KR}, an N-function has also the following properties:
		\begin{enumerate}
			\item $G$ is continuous, convex, increasing, even and $G(0) = 0$.
	\item $G$ is super-linear at zero and at infinite, that is $$\lim_{x\rightarrow 0} \dfrac{G(x)}{x}=0$$and
	$$\lim_{x\rightarrow \infty} \dfrac{G(x)}{x}=\infty.$$
	
		\end{enumerate}Indeed, the above conditions serve as an equivalent definition of N-functions.
		
		An important property for N-functions is the following:
		\begin{definition}
		
			 We say that the N-function  $G$ satisfies the $\bigtriangleup_{2}$ condition if  there exists $C > 2$ such that
			\begin{equation*}
				G(2x) \leq C G(x) \,\,\text{~~~for all~~} x \in \mathbb{R}_+.
			\end{equation*}
			\end{definition}
			
			 Examples of functions satisfying the $\bigtriangleup_{2}$ condition are:
    \begin{itemize}
        \item $G(t)= t^{p}$, $t \geq 0$, $p > 1$;
        \item $G(t)=(1+|t|)\log(1+|t|) - |t|$;
        \item $G(t)=t^{p}\chi_{(0, 1]}(t) + t^{q}\chi_{(1, \infty)}(t)$, $t \geq 0$, $p, q > 1$.
    \end{itemize}

    
    By \cite[Theorem 4.1, Chapter 1]{KR},  an N-function  satisfies $\bigtriangleup_{2}$ condition if and only if there is $p^{+} > 1$ such that
			\begin{equation}\label{eq p mas}
		\frac{tg(t)}{G(t)} \leq p^{+}, ~~~~~\forall\, t>0.
	\end{equation}
	
	Associated to $G$ is  the N-function  complementary to it which is defined as follows:
		\begin{equation}\label{Gcomp}
			\widetilde{G} (t) := \sup \left\lbrace tw-G(w) \colon w>0 \right\rbrace .
		\end{equation}Moreover, the following representation holds for $\widetilde{G}$:
		$$\widetilde{G}(t)=\int_0^{t}g^{-1}(s)\,ds,$$where $g^{-1}$ is the right-continuous inverse of $g$. We recall that the role played by $\widetilde{G}$ is the same as the conjugate exponent functions when $G(t)=t^{p}$, $p >1$.
	
The definition of the complementary function assures that the following Young-type inequality holds
	\begin{equation}\label{2.5}
		at \leq G(t)+\widetilde{G} (a) \text{  for every } a,t \geq 0.
	\end{equation}
	
	We also quote the following useful lemma.
    \begin{lemma}\cite[Lemma 2.9]{BS}\label{G g}
        Let $G$ be an N-function. If $G$ satisfies \eqref{eq p mas} then 
        \[
            \tilde{G}(g(t)) \leq (p^+-1)G(t),
        \]
        where $g=G'$ and $\tilde{G}$ is the complementary function of $G.$
\end{lemma}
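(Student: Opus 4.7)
The plan is to exploit the equality case in the Young-type inequality \eqref{2.5}. Since $G$ is an N-function with $g=G'$ right-continuous and non-decreasing, the supremum in the definition \eqref{Gcomp} of $\widetilde{G}$ is attained when the derivative (in $w$) of $tw - G(w)$ vanishes, i.e., when $t = g(w)$. Equivalently, for a fixed $t > 0$, choosing $a = g(t)$ in \eqref{Gcomp} and taking $w = t$ as the maximizer, I expect the identity
\begin{equation*}
\widetilde{G}(g(t)) = t\,g(t) - G(t).
\end{equation*}
I would verify this identity carefully using convexity of $G$ (so that $G(w) \geq G(t) + g(t)(w-t)$, giving $g(t)w - G(w) \leq g(t)t - G(t)$ for every $w$), which yields the equality after noting the reverse inequality $\widetilde{G}(g(t)) \geq g(t)t - G(t)$ by choosing $w=t$ in the sup.

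Once this Legendre-type identity is in hand, the lemma follows immediately from the hypothesis \eqref{eq p mas}: since $t g(t) \leq p^+ G(t)$ for all $t > 0$,
\begin{equation*}
\widetilde{G}(g(t)) = tg(t) - G(t) \leq p^+ G(t) - G(t) = (p^+ - 1)G(t),
\end{equation*}
which is the desired inequality.

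The only nontrivial point is justifying the identity $\widetilde{G}(g(t)) = tg(t) - G(t)$ when $g$ is merely right-continuous and non-decreasing (not necessarily strictly increasing or continuous). This is handled by the standard convexity argument above, which does not require strict monotonicity; the subgradient inequality $G(w) - G(t) \geq g(t)(w-t)$ holds for the right-derivative $g$ of the convex function $G$, and this is exactly what is needed. So no real obstacle remains — the proof is essentially a one-line consequence of the Fenchel--Young equality combined with \eqref{eq p mas}.
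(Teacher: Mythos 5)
Your proof is correct. The paper itself does not give a proof of this lemma --- it simply cites \cite[Lemma 2.9]{BS} --- so there is no in-text argument to compare against. Your argument is the standard one: the Fenchel--Young equality $\widetilde{G}(g(t)) = tg(t) - G(t)$ for $t>0$, justified exactly as you do via the subgradient inequality $G(w) \ge G(t) + g(t)(w-t)$ for the upper bound and the choice $w=t$ for the lower bound, followed by the bound $tg(t) \le p^+ G(t)$ from \eqref{eq p mas}. Your remark that only right-continuity of the non-decreasing derivative is needed is also correct, since the subgradient inequality requires no strict monotonicity or continuity of $g$; and the trivial case $t=0$ reads $0\le 0$. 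This is precisely how such estimates are obtained in the Orlicz-space literature (including in \cite{BS}), so nothing further is needed.
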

	
	By \cite[Theorem 4.3,   Chapter 1]{KR}, a necessary and sufficient condition for the N-function $\widetilde{G} $ complementary to $G$ to satisfy the $\bigtriangleup_{2}$ condition is that there is $p^{-} > 1$ such that
			\begin{equation}
	p^{-} \leq 	\frac{tg(t)}{G(t)}, ~~~~~\forall\, t>0.
	\end{equation}

	From now on, we will assume that the N-function  $G(t)= \int _{0} ^{t} g(\tau) d\tau$  satisfies the following growth behaviour:
	\begin{equation}\label{G1}
		1 < p^{-} \leq \frac{tg(t)}{G(t)} \leq p^{+} < \infty, ~~~~~\forall t>0.
	\end{equation}
	For the last section of this work, we need to assume the following condition
	\begin{equation}\label{L}
	p^- -1 \leq \frac{tg'(t)}{g(t)}\leq p^+-1, \quad \text{for } t>0.	
	\end{equation}
	It is well known the condition \eqref{L} implies \eqref{G1}.
	Given two $N$-functions $A$ and $B$, we say that $A$ increases essentially more slowly than $B$, denoted by
$A \ll B,$ if for any $c > 0$,
		\begin{equation*}
			\lim_{t \rightarrow \infty} \dfrac{A(ct)}{B(t)}=0.
		\end{equation*}

Given an N-function $G$, with $g= G'$, we define the Orlicz-Lebesgue class $L^G(\Omega)$ as follows
$$L^G(\Omega):=\left\lbrace u: \Omega \to \mathbb{R}, \int_\Omega G(u)\,dx < \infty\right\rbrace.$$If $G$ and its complementary function $\tilde{G}$ satisfy the $\Delta_2$ condition, then $L^G$ becomes a vector spaces. It is a Banach space with the Luxemburg norm
$$\|u\|_G:=\inf \left\lbrace \lambda > 0: \int_\Omega G\left(\dfrac{u}{\lambda}\right)\,dx \leq 1\right\rbrace.$$

Moreover, we denote the convex modular by
$$
\rho_{G}(u):=\int_\Omega G(u)\,dx.
$$

From now on we shall assume that any $N$-function and its complementary satisfy the $\Delta_2$-condition.

Moreover, for a positive integer $m$ we will also consider the next Orlicz-Sobolev spaces
$$W^{m, G}(\Omega):=\left\lbrace u \in L^G(\Omega), \,|D^\alpha u| \in L^G(\Omega), \,\text{for all multi-index }\, |\alpha|\leq m\right\rbrace,$$where for $\alpha=(\alpha_1,..., \alpha_n)$,
$$D^{\alpha}u=(\partial_1^{\alpha_1}u, ...,\partial_n^{\alpha_n}u)$$denotes the distributional derivative of $u$ of order $\alpha$. The space $W^{m, G}(\Omega)$ equipped with the norm
$$\|u\|:= \sum_{|\alpha|\leq m}\|D^{\alpha}u\|_G$$is a Banach space. 
We will always assume that $G$ satisfies:
\begin{itemize}
\item $$\int_0^1 \dfrac{G^{-1}(s)}{s^{(n+1)/n}}\,ds <\infty$$
\item $$\int_1^\infty \dfrac{G^{-1}(s)}{s^{(n+1)/n}}\,ds =\infty$$
\end{itemize}

 For a given $N$-function $G$, define the first order Sobolev conjugate function $G_1^{*}$  of $G$ by means of
 $$(G_1^{*})^{-1}(t):=\int_0^{t}\dfrac{G^{-1}(s)}{s^{1+1/n}}\,ds.$$Then $G_1^*$ is an $N$-function (see \cite{BSV2}). Next, we define the $m$-th order conjugate Sobolev function of $G$ as recursively as follows
 $$G^{*}_0 := G$$
 $$G^{*}_j:= (G^{*}_{j-1})^*, \quad j=1,..., m.$$At each stage, we assume that
 $$\int_0^1 \dfrac{(G_j^*)^{-1}(s)}{s^{(n+1)/n}}\,ds <\infty.$$We obtain in this way a finite sequence of $N$-functions $G^{*}_j$, $j=0, ..., m_0$, where $m_0$ is such that
 $$\int_1^\infty \dfrac{(G_{m_0-1}^*)^{-1}(s)}{s^{(n+1)/n}}\,ds =\infty$$but
 $$\int_1^\infty \dfrac{(G_{m_0}^*)^{-1}(s)}{s^{(n+1)/n}}\,ds <\infty.$$Indeed, $m_0 \leq n$, since by induction it can be proved that (see \cite{DT})
 $$(G_m^*)^{-1}(t)\leq K_m t^{(n-j)/n}.$$

 Then, we have the following embedding theorem for higher-order Orlicz-Sobolev spaces stated in \cite{DT}.

 \begin{theorem}\label{compact theorm}
 Let $\Omega \subset \mathbb{R}^{n}$ be a bounded domain with the cone property. Let $G$ be an $N$-function and let $m_0$ be defined as before. Then
 \begin{enumerate}
 \item if $1 \leq m \leq m_0$, then $W^{m, G}(\Omega) \hookrightarrow L^{G_{m}^{*}}(\Omega)$. Moreover, if $B$ is an $N$-function increasing essentially more slowly than $G^*_m$ near infinity, then the embedding $W^{m, G}(\Omega) \hookrightarrow L^{B}(\Omega)$ is compact;
 
 \item if $m > m_0$, then $W^{m, G}(\Omega) \hookrightarrow C(\Omega) \cap L^{\infty}(\Omega)$.
 \end{enumerate}
 \end{theorem}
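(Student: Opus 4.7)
The plan is to prove both parts by induction on $m$, with the crucial work concentrated in the first-order case, and then propagating it through the recursive definition of the Sobolev conjugates $G_j^{*}$. The statement essentially asserts that each application of the operation $G \mapsto G_1^{*}$ gains one degree of integrability, and that the process terminates in $L^{\infty}\cap C(\Omega)$ when the second integral condition fails.

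For the base case $m=1$ of part (1), I would prove the continuous embedding $W^{1,G}(\Omega)\hookrightarrow L^{G_1^*}(\Omega)$ by the Donaldson--Trudinger route: use symmetric decreasing rearrangement to reduce the inequality
$$\|u\|_{G_1^*}\lesssim \|u\|_{W^{1,G}}$$
to a one-dimensional Hardy-type estimate on $u^{*}$, at which point the very definition of $(G_1^*)^{-1}$ as the integral $\int_0^t G^{-1}(s)\,s^{-1-1/n}\,ds$ is precisely what makes the estimate sharp. For the compact embedding, given $B\ll G_1^*$ near infinity, I would use the first integrability assumption on $G$ to obtain equi-continuity in measure via a Fréchet--Kolmogorov argument in $L^{G}$, then upgrade convergence in measure to convergence in $L^{B}$ by a de la Vallée-Poussin argument using $B(ct)/G_1^*(t)\to 0$.

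Once the base case is in hand, part (1) for general $1\leq m\leq m_0$ follows by a direct iteration: if $u\in W^{m,G}(\Omega)$, then each $D^{\alpha}u$ with $|\alpha|\leq m-1$ lies in $W^{1,G}(\Omega)\hookrightarrow L^{G_1^*}(\Omega)$, so $u\in W^{m-1,G_1^*}(\Omega)$; repeating gives $u\in W^{m-k, G_k^*}(\Omega)$ for each $k\leq m$, and setting $k=m$ yields the continuous embedding $W^{m,G}(\Omega)\hookrightarrow L^{G_m^*}(\Omega)$. For compactness, I would stop the iteration one step early, reducing to $W^{1,G_{m-1}^*}(\Omega)\hookrightarrow L^{B}(\Omega)$ (continuous) and apply the base-case compactness statement, noting that $B\ll G_m^* = (G_{m-1}^*)_1^*$ is exactly the hypothesis needed for the first-order compact embedding.

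For part (2), the divergence of $\int_1^{\infty}(G_{m_0}^*)^{-1}(s)/s^{(n+1)/n}\,ds$ means the construction of $G_{m_0+1}^*$ aborts, and morally the target exponent has become $\infty$. Concretely, the same chain of first-order embeddings gives $W^{m,G}(\Omega)\hookrightarrow W^{m-m_0,G_{m_0}^*}(\Omega)$, and then one proves a Morrey-type result: under the divergence of the above integral, $W^{1,G_{m_0}^*}(\Omega)\hookrightarrow C(\overline{\Omega})\cap L^{\infty}(\Omega)$. This is obtained by estimating
$$|u(x)-u(y)|\leq \int_{[x,y]}|\nabla u|\,d\mathcal{H}^1$$
on segments, followed by Hölder's inequality in Orlicz form and the identification of the resulting integral with $\int_0^{|x-y|}(G_{m_0}^*)^{-1}(s)/s^{1+1/n}\,ds$ up to constants, which by assumption is finite. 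I expect the main obstacle to be the sharp first-order inequality of the base case: getting the constant-free form of the Sobolev inequality for a general $N$-function requires careful handling of the rearrangement-based reduction and a good understanding of how the $\Delta_2$-condition on both $G$ and $\widetilde{G}$ controls the interplay between modular and Luxemburg norms; all subsequent steps are formal iterations of this core estimate.
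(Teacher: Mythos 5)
The paper does not prove this theorem; it is quoted directly from Donaldson and Trudinger \cite{DT} as a standing background result, so there is no in-paper argument to compare against. That said, your sketch follows the standard route (first-order base case plus iteration through the recursively defined conjugates), which is indeed how the higher-order embedding is obtained from the first-order one. Two points to fix. First, you wrote that ``the divergence of $\int_1^{\infty}(G_{m_0}^*)^{-1}(s)/s^{(n+1)/n}\,ds$ means the construction of $G_{m_0+1}^*$ aborts,'' but this is the reverse of the paper's definition of $m_0$: the integral $\int_1^\infty (G_{m_0}^*)^{-1}(s)\,s^{-(n+1)/n}\,ds$ is \emph{finite}, and it is exactly this finiteness (the failure of the divergence requirement) that blocks the construction of $G_{m_0+1}^*$ and simultaneously yields $W^{1,G_{m_0}^*}\hookrightarrow L^\infty\cap C$. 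Your later phrase ``which by assumption is finite'' shows you have the right mechanism in mind, so this is mostly a slip in the opening sentence, but it should be corrected.

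Second, the displayed integral you propose for the Morrey-type modulus of continuity, $\int_0^{|x-y|}(G_{m_0}^*)^{-1}(s)/s^{1+1/n}\,ds$, is not the right quantity: that expression is the formula defining the next Sobolev conjugate, not a modulus of continuity, and its finiteness is governed by the behaviour near $s=0$, not by the finiteness of $\int_1^\infty(G_{m_0}^*)^{-1}(s)s^{-(n+1)/n}\,ds$. In the standard potential or rearrangement proofs, the modulus of continuity of $u$ is controlled by an expression of the form $\int_{c|x-y|^{-n}}^\infty A^{-1}(s)\,s^{-1-1/n}\,ds$ (with $A=G_{m_0}^*$), which tends to $0$ as $|x-y|\to 0$ precisely because the tail integral converges. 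You should replace the segment-integral heuristic with the correct tail estimate. Beyond these two issues the architecture is sound: the iterative reduction $W^{m,G}\hookrightarrow W^{m-1,G_1^*}\hookrightarrow\cdots$ is correct, and stopping one step before the end to invoke the first-order compact embedding for $B\ll G_m^*=(G_{m-1}^*)_1^*$ is exactly how the compactness part is obtained. One small stylistic remark: Donaldson and Trudinger's original argument proceeds via potential estimates rather than symmetric rearrangement; the rearrangement route you describe is the later, cleaner approach of Talenti and Cianchi, and either yields the theorem.
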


In this paper we will consider the second order case $m=2$. Indeed, in order to take into account  boundary conditions, we denote $W^{2, G}_0(\Omega)$ the closure of $C_0^{\infty}(\Omega)$ in $W^{2, G}(\Omega)$. 
\begin{remark}\label{norma}
By \cite{WW},  the norm $\|u\|$ in $W^{2, G}_0(\Omega)$ is equivalent to
$$\|u\|_{2, G}:=\|\Delta u\|_G.$$
\end{remark}
From now, we will consider the norm $\|\cdot\|_{2, G}$. 
The relevant modular defined on $W^{2, G}_0(\Omega)$ is given by
$$\rho_{2,G}(u):=\rho_{G}(\Delta u)=\int_\Omega G(\Delta u)\,dx.$$By standard properties of modulars in Orlicz spaces, we obtain the following result.
\begin{proposition}Let $u \in W^{2, G}_0(\Omega)$. Then
\begin{itemize}
\item $\|u\|_{2, G} < 1 \Leftrightarrow \rho_{2, G}(u)<1.$
\item $\|u\|_{2, G} = 1 \Leftrightarrow \rho_{2, G}(u)=1.$
\item $\|u\|_{2, G} > 1 \Leftrightarrow \rho_{2, G}(u)>1.$
\item $\|u\|_{2, G} \to 0 \,(\to \infty) \Leftrightarrow \rho_{2, G}(u)\to 0\,(\to \infty)$
\end{itemize}
\end{proposition}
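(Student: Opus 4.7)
By Remark~\ref{norma} the norm on $W^{2,G}_0(\Omega)$ under consideration is $\|u\|_{2,G}=\|\Delta u\|_G$, and by definition $\rho_{2,G}(u)=\rho_G(\Delta u)$. So writing $v:=\Delta u\in L^G(\Omega)$, the four assertions are exactly the standard modular-norm relations in the Orlicz space $L^G(\Omega)$ applied to $v$. The plan is therefore to prove these four equivalences at the level of $L^G$, using only the Luxemburg definition of $\|\cdot\|_G$, convexity of $G$ with $G(0)=0$, and the $\Delta_2$-condition which is in force throughout.

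For the first three items I will use the basic two-sided bound that I verify first: $\rho_G(v)\le \|v\|_G$ whenever $\|v\|_G\le 1$, and $\|v\|_G\le \rho_G(v)$ whenever $\|v\|_G\ge 1$. To get the former, if $a:=\|v\|_G\in(0,1]$, then from $\int_\Omega G(v/a)\,dx\le 1$ and the convexity inequality $G(v)=G(a\cdot v/a)\le a\,G(v/a)$ I integrate to obtain $\rho_G(v)\le a=\|v\|_G$. The symmetric bound when $\|v\|_G\ge 1$ follows similarly by comparing $G(v/a)$ with $G(v)/a$ via convexity. From these two inequalities the equivalences $\|v\|_G<1\Leftrightarrow\rho_G(v)<1$, $\|v\|_G>1\Leftrightarrow\rho_G(v)>1$ are immediate (one direction from the bound, the other by shrinking $\lambda$ slightly below $1$ in the Luxemburg infimum, which is legitimate because under $\Delta_2$ the map $\lambda\mapsto\int_\Omega G(v/\lambda)\,dx$ is finite and continuous on $(0,\infty)$). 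The boundary case $\|v\|_G=1\Leftrightarrow\rho_G(v)=1$ follows by exclusion from the two strict cases.

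For the convergence statement I use $\Delta_2$ crucially. If $\|v_n\|_G\to 0$, then eventually $\|v_n\|_G\le 1$ and the first bound gives $\rho_G(v_n)\le\|v_n\|_G\to 0$. Conversely, assume $\rho_G(v_n)\to 0$ and fix $\varepsilon>0$. Iterating $G(2t)\le C G(t)$ a number of times depending only on $\varepsilon$, we get a constant $C_\varepsilon>0$ with $G(t/\varepsilon)\le C_\varepsilon G(t)$ for all $t\ge 0$; integrating yields $\int_\Omega G(v_n/\varepsilon)\,dx\le C_\varepsilon\rho_G(v_n)\to 0$, so the integral is $\le 1$ for large $n$ and hence $\|v_n\|_G\le\varepsilon$. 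The argument for $\|v_n\|_G\to\infty\Leftrightarrow \rho_G(v_n)\to\infty$ is completely analogous, replacing $\varepsilon$ by $1/M$ and using the other half of the two-sided bound.

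There is no real obstacle here: the statement is a routine transcription of textbook Orlicz-space facts to $\Delta u$ in place of $u$. The only point that requires a small amount of care is making sure that each use of convexity is applied with the correct direction and that each appearance of $\Delta_2$ is properly justified (continuity of the modular for the boundary passage $\|v\|_G=1$, and the iterated $\Delta_2$ bound for the $\to 0$ and $\to\infty$ statements).
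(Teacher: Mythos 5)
The paper provides no proof of this proposition at all: it simply writes ``By standard properties of modulars in Orlicz spaces, we obtain the following result,'' and moves on. So there is no argument in the paper to compare against; your proof supplies the details that the authors delegate to the literature. Your route — reduce to $v=\Delta u$ via Remark~\ref{norma}, then prove the four modular–norm relations in $L^G(\Omega)$ directly from the Luxemburg definition, convexity of $G$ with $G(0)=0$, and the $\Delta_2$-condition — is exactly the standard one, and it is fundamentally correct. (The authors' Lemma~\ref{comp norm modular}, stated immediately afterwards, actually quantifies these equivalences via the power bounds $\xi^\pm$ and would give the proposition as an instant corollary, but the paper presents it in the opposite order.)

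One step deserves a sharper statement. For the bound in the regime $\|v\|_G\ge 1$ you say it ``follows similarly by comparing $G(v/a)$ with $G(v)/a$ via convexity,'' but convexity alone gives $\int_\Omega G(v/a)\,dx\le\rho_G(v)/a$, and combined only with $\int_\Omega G(v/a)\,dx\le 1$ this does not force $\rho_G(v)\ge a$. You need either the equality $\int_\Omega G\bigl(v/\|v\|_G\bigr)\,dx=1$ for $v\ne 0$ (which holds precisely because, under $\Delta_2$, the map $\lambda\mapsto\int_\Omega G(v/\lambda)\,dx$ is finite and continuous — the same fact you invoke later for the boundary case), or the $\Delta_2$-free route: for every $\mu<\|v\|_G$ one has $\int_\Omega G(v/\mu)\,dx>1$, and convexity then gives $\rho_G(v)>\mu$, so letting $\mu\uparrow\|v\|_G$ yields $\rho_G(v)\ge\|v\|_G$. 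Making one of these explicit closes the only real gap; everything else in your write-up, including the iterated $\Delta_2$ estimate $G(t/\varepsilon)\le C_\varepsilon G(t)$ used for the convergence statements, is correct.
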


To close the section, we quote the following further  useful relation between modulars and norms.
\begin{lemma}\label{comp norm modular}
        Let $G$ be an N-function satisfying \eqref{G1},  and let 
        $\xi^\pm\colon[0,\infty)$ $\to\mathbb{R}$ be defined as
        \[
            \xi^{-}(t):= 
            \min \big \{  t^{p^{-}}, t^{p^{+}} \big  \} ,
            \quad \text{ and }  \quad
            \xi^{+}(t):=\max \big \{  t^{p^{-}}, t^{p^{+}} \big \} . 
         \] 
         Then
   
             $$\xi^{-}(\|u\|_{2, G}) \leq \rho_{2,G}( u) 
                    \leq  \xi^{+}(\|u\|_{2, G}).$$
    \end{lemma}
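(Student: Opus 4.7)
The plan is to reduce the inequality to a statement about the modular on $L^G(\Omega)$ applied to $\Delta u$, and then exploit the power-type scaling of $G$ encoded in condition \eqref{G1}. Setting $\alpha := \|u\|_{2,G} = \|\Delta u\|_G$, I would first dispatch the trivial case $\alpha = 0$ (where $u \equiv 0$ and both sides vanish), so assume $\alpha > 0$. Under the standing $\Delta_2$-assumption on $G$ and $\widetilde G$, the infimum in the definition of the Luxemburg norm is attained, namely
\[
\int_\Omega G\!\left(\frac{\Delta u}{\alpha}\right)\,dx = 1,
\]
which is a standard consequence of the $\Delta_2$-condition (the modular is continuous on $L^G$ and blows up past $1$ for larger dilations).

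The main technical step is the scaling identities
\[
\lambda^{p^{-}} G(s) \leq G(\lambda s) \leq \lambda^{p^{+}} G(s) \quad (\lambda \geq 1), \qquad \lambda^{p^{+}} G(s) \leq G(\lambda s) \leq \lambda^{p^{-}} G(s) \quad (0 < \lambda \leq 1),
\]
valid for all $s > 0$. To prove these, fix $s > 0$, set $\psi(\lambda) := G(\lambda s)$, and compute $\lambda \psi'(\lambda) = \lambda s\, g(\lambda s)$. Applying \eqref{G1} at the point $\lambda s$ gives $p^{-}\psi(\lambda) \leq \lambda \psi'(\lambda) \leq p^{+}\psi(\lambda)$, hence $p^{-}/\lambda \leq (\log \psi)'(\lambda) \leq p^{+}/\lambda$. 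Integrating from $1$ to $\lambda$ (in the appropriate direction according to the sign of $\log \lambda$) and exponentiating yields the claimed bounds.

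With these ingredients in place, I would split into the cases $\alpha \geq 1$ and $0 < \alpha \leq 1$. In the former, applying the scaling with multiplicative factor $\alpha$ to the integrand gives, pointwise,
\[
\alpha^{p^{-}} G\!\left(\frac{\Delta u}{\alpha}\right) \leq G(\Delta u) \leq \alpha^{p^{+}} G\!\left(\frac{\Delta u}{\alpha}\right),
\]
and integrating over $\Omega$ together with the attainment identity yields $\alpha^{p^{-}} \leq \rho_{2,G}(u) \leq \alpha^{p^{+}}$. For $\alpha \geq 1$ one has $\xi^{-}(\alpha) = \alpha^{p^{-}}$ and $\xi^{+}(\alpha) = \alpha^{p^{+}}$, so this is exactly the desired bound. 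The case $0 < \alpha \leq 1$ is symmetric with the roles of $p^{-}$ and $p^{+}$ swapped, which matches precisely the other branch in the definition of $\xi^{\pm}$. No essential obstacle is anticipated; the only care needed is bookkeeping of which exponent serves as the upper versus lower bound in each regime, and invoking the $\Delta_2$-condition to justify the attainment identity that underpins the whole argument.
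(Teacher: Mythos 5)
Your argument is correct. Note first that the paper does not actually prove this lemma: it is presented with the phrase ``we quote the following further useful relation between modulars and norms,'' i.e.\ it is cited as a standard fact about Luxemburg norms and modulars under the two-sided Simonenko-type bound \eqref{G1}. So there is no paper proof to compare against; what you have supplied is a self-contained proof of the quoted result. Your argument is the usual one: establish the pointwise power comparison
\[
\min\{\lambda^{p^-},\lambda^{p^+}\}\,G(s)\ \le\ G(\lambda s)\ \le\ \max\{\lambda^{p^-},\lambda^{p^+}\}\,G(s),\qquad \lambda,s>0,
\]
by integrating the logarithmic derivative of $\lambda\mapsto G(\lambda s)$ between $1$ and $\lambda$ (this is where \eqref{G1} is used), and then evaluate at $s=\Delta u/\alpha$ with $\alpha=\|u\|_{2,G}$ and integrate over $\Omega$, invoking the normalization identity $\int_\Omega G(\Delta u/\alpha)\,dx=1$. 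Both steps are sound. Two small points worth flagging for rigor: (i) $G$ need only be absolutely continuous with $G'=g$ a.e.\ (since $g$ is monotone and right-continuous), but this is enough for the fundamental-theorem-of-calculus step because $\lambda\mapsto\log G(\lambda s)$ is absolutely continuous on compact subintervals of $(0,\infty)$; (ii) the attainment $\int_\Omega G(\Delta u/\alpha)\,dx=1$ for $\alpha>0$ does indeed need the $\Delta_2$-condition (to guarantee $\int G(\Delta u/\lambda)\,dx<\infty$ for $\lambda$ slightly below $\alpha$, so monotone convergence applies from both sides), and you correctly invoked it. In short: the paper gives no proof, and yours is a complete and correct derivation by the standard technique.
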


\section{On the definition of weak solutions for the biharmonic $g$-Laplacian}\label{weak}

In this section, we discuss the notion of weak solutions to problems of the form:
\begin{equation}\label{main problem}
\begin{cases}
\Delta_g^2 u = f(x, u)\quad \text{ in }\Omega\\
u\in W^{2, G}_0(\Omega),
\end{cases}
\end{equation}where the nonlinearity $f:\Omega \times \mathbb{R}\to \mathbb{R}$ is a Carath\'eodory function satisfying a growth condition so that
$$f(\cdot, u)v\in L^1(\Omega),$$for any $u, v\in W_0^{2, G}(\Omega)$. For instance, it is enough to assume that
$$|f(x, u)|\leq Cg(u)+ \phi(x), \quad g=G', \phi\in L^{\tilde{G}}(\Omega).$$

We consider the energy function $\Phi: W^{2, G}_0(\Omega) \to \mathbb{R}$ associated to the problem \eqref{main problem}:
$$\Phi(u):= \int_\Omega G(\Delta u)\,dx -\int_\Omega F(x, u)\,dx,\quad u \in W^{2, G}_0(\Omega),$$where
$$F(x, u):=\int_0^u f(x, t)\,dt.$$ 
Observe that $\Phi \in C^1(W^{2, G}_0(\Omega), \mathbb{R})$ and that
$$\left\langle \Phi'(u), v\right\rangle = \int_\Omega\dfrac{ g(\Delta u)}{|\Delta u|}\Delta u\Delta v\,dx-\int_\Omega f(x, u)v\,dx, \quad \text{for all }v\in W^{2, G}_0(\Omega).$$

In the next results, we will stablish some useful  properties of $L'$.

To motivate the definition of weak solutions and the boundary conditions in \eqref{main problem}, suppose that $u\in C^2_0(\Omega)$ is a  classical solution to Problem \eqref{main problem}. Then, for any $v \in C^2_0(\Omega)$, we have by applying integration by parts twice:
\begin{equation}\label{motivation}
\begin{split}
\int_\Omega f(x, u)v\,dx& = \int_\Omega v\Delta\left(\frac{g(|\Delta u|)}{|\Delta u|}\Delta u\right)\, dx \\& = \int_{\partial \Omega}v\frac{\partial}{\partial \nu}\left(\frac{g(|\Delta u|)}{|\Delta u|}\Delta u\right)\,d\sigma -\int_\Omega \nabla v \cdot \nabla\left(\frac{g(|\Delta u|)}{|\Delta u|}\Delta u\right)\,dx\\ & = \int_\Omega \Delta v\, \frac{g(|\Delta u|)}{|\Delta u|}\Delta u\,dx -\int_{\partial \Omega}\frac{g(|\Delta u|)}{|\Delta u|}\Delta u\frac{\partial v}{\partial \nu}\,d\sigma\\& = \int_\Omega \Delta v\,\frac{g(|\Delta u|)}{|\Delta u|}\Delta u \,dx.
\end{split}
\end{equation}

Based on the previous comments, we define next the notion of weak solutions to Problem \eqref{main problem}. 

\begin{definition}
We say that $u\in  W^{2, G}_0(\Omega)$ is a weak solution of Problem \eqref{main problem} if and only if
$$\int_\Omega \Delta v\,\frac{g(|\Delta u|)}{|\Delta u|}\Delta u \,dx = \int_\Omega f(x, u)v\,dx, \quad \text{for all }v\in  W^{2, G}_0(\Omega).$$
\end{definition}

\section{Basic properties of $\Delta_g^2$}\label{prop}

Let $L: W^{2, G}_0(\Omega) \to \mathbb{R}$ be defined by
$$L(u):=\int_\Omega G(\Delta u)\,dx.$$
\begin{proposition}\label{properties biharmonic}We have that $L \in C^1(W^{2, G}_0(\Omega),\mathbb{R})$ and its derivative $L'$ satisfies
\begin{itemize}
\item[(i)]  $L': W^{2, G}_0(\Omega)\to [W^{2, G}_0(\Omega)]'$ is a bounded, uniformly monotone  homeomorphism. In particular, the inverse operator $(L')^{-1}: [W^{2, G}_0(\Omega)]' \to W^{2, G}_0(\Omega)$ exists and it is continuous.
\item[(ii)] $L'$ is an operator of class $S_+$, that is, for any sequence $u_n \rightharpoonup u$ such that
$$\limsup_{n \to \infty}\left\langle L'(u)-L'(u_n), u-u_n\right\rangle \leq 0,$$there holds $u_n\to u$ in $W^{2, G}_0(\Omega)$.
\end{itemize}

\end{proposition}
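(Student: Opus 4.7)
The argument reduces to the fact that $L$ is the modular obtained by composing an $N$-function with the second-order Laplace operator, and then transferring the standard theory of the $g$-Laplacian from $W^{1,G}_0$ to $W^{2,G}_0$ by replacing $\nabla$ with $\Delta$. This is possible because, by Remark \ref{norma}, $\|\Delta u\|_G$ is an equivalent norm in $W^{2,G}_0(\Omega)$ and this space is reflexive (since both $G$ and $\widetilde G$ satisfy $\Delta_2$).

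First, I would verify that $L \in C^1$ with
$$\langle L'(u), v\rangle = \int_\Omega \frac{g(|\Delta u|)}{|\Delta u|}\,\Delta u\, \Delta v\,dx,$$
which follows from differentiation under the integral (justified by $\Delta_2$ on $G$). Next, for boundedness I would apply H\"older in Orlicz spaces to obtain
$$|\langle L'(u), v\rangle| \le 2\,\bigl\|g(|\Delta u|)\bigr\|_{\widetilde G}\,\|\Delta v\|_G,$$
and use Lemma \ref{G g} together with the modular/norm comparison (Lemma \ref{comp norm modular}) to control $\|g(|\Delta u|)\|_{\widetilde G}$ by a polynomial expression in $\|u\|_{2,G}$; this gives $L'$ bounded on bounded sets.

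The heart of the proof is the monotonicity inequality: I would establish the existence of constants $c_1, c_2 > 0$ such that, for every $u,v \in W^{2,G}_0(\Omega)$,
\begin{equation}\label{monot-key}
\langle L'(u)-L'(v),u-v\rangle \;\ge\; c_1\int_\Omega G(|\Delta u-\Delta v|)\,dx \;\ge\; c_2\,\xi^{-}(\|u-v\|_{2,G}),
\end{equation}
where the second inequality uses Lemma \ref{comp norm modular}. The first bound is a scalar inequality applied pointwise: since the map $t \mapsto g(|t|)t/|t|$ is the derivative of the strictly convex $N$-function $G(|\cdot|)$, the vector inequality
$$\Bigl(\tfrac{g(|a|)}{|a|}a - \tfrac{g(|b|)}{|b|}b\Bigr)(a-b) \ge c_1\, G(|a-b|)$$
holds for all $a,b\in\mathbb{R}$ under the growth condition \eqref{G1}. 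This is the classical $g$-Laplacian monotonicity lemma (analogous to the one used, e.g., in \cite{MT,YM}), and I expect its pointwise derivation (splitting cases $|a|+|b|$ large vs.\ small, using \eqref{G1}) to be the main technical obstacle. Coercivity is immediate from \eqref{G1}:
$$\langle L'(u),u\rangle = \int_\Omega g(|\Delta u|)|\Delta u|\,dx \ge p^{-}\rho_{2,G}(u) \ge p^{-}\xi^{-}(\|u\|_{2,G}),$$
so $\langle L'(u),u\rangle/\|u\|_{2,G}\to\infty$ as $\|u\|_{2,G}\to\infty$.

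With these ingredients in place, $L'$ is continuous (from $C^1$), bounded on bounded sets, strictly monotone and coercive on the reflexive space $W^{2,G}_0(\Omega)$, and hence by the Browder--Minty surjectivity theorem it is a bijection onto $[W^{2,G}_0(\Omega)]'$. Continuity of $(L')^{-1}$ follows by testing \eqref{monot-key} with $u = (L')^{-1}(f_n)$, $v = (L')^{-1}(f)$: the right-hand side controls $\xi^{-}(\|(L')^{-1}(f_n)-(L')^{-1}(f)\|_{2,G})$, while the left-hand side is bounded by $\|f_n-f\|_{[W^{2,G}_0]'}\,\|(L')^{-1}(f_n)-(L')^{-1}(f)\|_{2,G}$. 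Finally, for (ii), given $u_n \rightharpoonup u$ with $\limsup_n\langle L'(u_n)-L'(u),u_n-u\rangle \le 0$, the nonnegativity from \eqref{monot-key} forces the limit to be $0$; then \eqref{monot-key} yields $\rho_{2,G}(u_n-u) \to 0$, which by the modular convergence/norm equivalence in Orlicz spaces implies $\|u_n-u\|_{2,G} \to 0$, proving the $(S_+)$ property.
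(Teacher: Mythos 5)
The heart of your plan is the pointwise inequality
\[
\Bigl(\tfrac{g(|a|)}{|a|}a - \tfrac{g(|b|)}{|b|}b\Bigr)(a-b) \;\ge\; c_1\, G(|a-b|),\qquad a,b\in\mathbb{R},
\]
and this is false for general $N$-functions satisfying \eqref{G1}. Take $G(t)=|t|^{3/2}$, so $p^-=p^+=3/2$ and $g(t)=\tfrac{3}{2}|t|^{1/2}\operatorname{sgn}(t)$, and set $a=n+1$, $b=n$. The left-hand side equals $\tfrac{3}{2}\bigl(\sqrt{n+1}-\sqrt{n}\bigr)\to 0$ as $n\to\infty$, while the right-hand side is the fixed constant $c_1 G(1)>0$. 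The same failure occurs for every $G$ with $p^-<2$; it is the Orlicz analogue of the fact that for the $p$-Laplacian with $1<p<2$ one only has
\[
\bigl(|a|^{p-2}a-|b|^{p-2}b\bigr)(a-b)\ \gtrsim\ (|a|+|b|)^{p-2}|a-b|^2,
\]
which degrades to $|a-b|^p$ only when $|a|+|b|\lesssim|a-b|$. Since both your uniform-monotonicity argument and your $(S_+)$ argument feed directly off \eqref{monot-key}, the gap propagates to both parts of the proposition; the continuity-of-inverse step, where you divide by $\|(L')^{-1}(f_n)-(L')^{-1}(f)\|_{2,G}$ against $\xi^-$, also collapses once \eqref{monot-key} is unavailable.

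For comparison, the paper does not attempt any pointwise lower bound on the integrand of $\langle L'(u)-L'(v),u-v\rangle$. Instead it estimates the different quantity $\langle L'(u-v),u-v\rangle=\int_\Omega g(|\Delta(u-v)|)\,|\Delta(u-v)|\,dx$, which by \eqref{G1} is immediately $\ge p^-\rho_{2,G}(u-v)\ge p^-\,\xi^-(\|u-v\|_{2,G})$ with no inequality between $L'(u)$ and $L'(v)$ needed (these are not the same quantity, since $L'$ is nonlinear). The paper then verifies hemicontinuity (by dominated convergence) and coercivity and appeals wholesale to Zeidler's Theorem 26.A to obtain both the homeomorphism statement and the $(S_+)$ property, rather than deriving inverse continuity and $(S_+)$ by hand as you propose. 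Your overall outline (Orlicz--H\"older for boundedness, modular/norm comparison, Browder--Minty for surjectivity) is sound, but the monotonicity and $(S_+)$ steps need a replacement for the false inequality --- either the paper's estimate plus the cited abstract theorem, or a correct Orlicz monotonicity lemma combined with the $\Delta_2$-based equivalence of modular and norm convergence.
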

\begin{proof}It is clear that $L': W^{2, G}_0(\Omega)\to [W^{2, G}_0(\Omega)]'$ given by
$$\left\langle L'(u), v\right\rangle = \int_\Omega \frac{g(|\Delta u|)}{|\Delta u|}\Delta u \Delta v\,dx$$is continuous in $W^{2, G}_0(\Omega)$.
We next prove that $L'$ is bounded, that is, it takes bounded subset of $W^{2, G}_0(\Omega)$ into bounded subsets of $[W^{2, G}_0(\Omega)]'$. Suppose for simplicity that $\|u\|_{2, G}\leq 1$, then for all $v\in W^{2, G}_0(\Omega)$ with $\|v\|_{2, G}\leq 1$ we have
\begin{equation*}
\begin{split}
|\left\langle L'(u), v\right\rangle|& = \big|\int_\Omega \frac{g(|\Delta u|)}{|\Delta u|}\Delta u \Delta v\,dx\big| \\& \leq C\left\|\frac{g(|\Delta u|)}{|\Delta u|}\Delta u \right\|_{\tilde{G}}\|\Delta v\|_{G}\\& \leq \max\left\lbrace \left[\rho_{\tilde{G}}\left(\frac{g(|\Delta u|)}{|\Delta u|}\Delta u \right)\right]^{1/\tilde{p}^+}, \left[\rho_{\tilde{G}}\left(\frac{g(|\Delta u|)}{|\Delta u|}\Delta u \right)\right]^{1/\tilde{p}^-}\right\rbrace\\&\leq C\max\left\lbrace [\rho_{G}(\Delta u)]^{1/\tilde{p}^+}, [\rho_{G}(\Delta u)]^{1/\tilde{p}^-}\right\rbrace \leq C.
\end{split}
\end{equation*}Hence, $\|L'(u)\|_{ [W^{2, G}_0(\Omega)]'}\leq C$ for all $\|u\|_{2, G}\leq 1$. 

To prove that $L'$ is uniformly monotone, we proceed as follows
\begin{equation*}
\begin{split}
\left\langle L'(u-v), u-v\right\rangle & = \int_\Omega g(|\Delta (u-v)|)|\Delta (u-v)|\,dx \\ & \geq p^{-}\int_\Omega G(\Delta (u-v))\,dx \\&\geq p^{-}\min\left\lbrace \|u-v\|^{p^{-}}_{2, G}, \|u-v\|^{p^{+}}_{2, G} \right\rbrace \\& \geq p^{-}\|u-v\|_{2, G}a(\|u-v\|_{2, G}),
\end{split}
\end{equation*}where
$$a(t):=\min\left\lbrace t^{p^{+}-1}, t^{p^{-}-1}\right\rbrace.$$Hence, $L'$ is uniformly monotone. 

In order to prove that $L'$ is a homeomorphism of class $S_{+}$, we will show that it is hemicontinuous, coercitive and apply  \cite[Theorem 26.A]{Z}. To prove that $L'$ is hemicontinuous, that is, the function 
$$t \to \left\langle L'(u+tv), w\right\rangle, \quad t \in [0, 1], \, u, v, w\in  W^{2, G}_0(\Omega)$$ is continuous, observe that for any $t\in [0, 1]$ and any sequence $t_n\in [0, 1]$ converging to $t_n$, it follows that
$$\frac{g(|\Delta u+t_n\Delta v|)}{|\Delta u+t_n\Delta v|}(\Delta u +t_n\Delta v)\to   \frac{g(|\Delta u+t\Delta v|)}{|\Delta u+t\Delta v|}(\Delta u +t\Delta v), \quad \text{as }n\to \infty$$and
\begin{align*}
\left|\frac{g(|\Delta u+t_n\Delta v|)}{|\Delta u+t_n\Delta v|}(\Delta u +t_n\Delta v)\Delta w\right|&\leq \tilde{G}\left(\frac{g(|\Delta u+t_n\Delta v|)}{|\Delta u+t_n\Delta v|}(\Delta u +t_n\Delta v)\right) + G(\Delta w)\\& \leq C(G(\Delta u)+G(\Delta v)+ G(\Delta w)) \in L^1(\Omega).
\end{align*}By dominated convergence theorem, we get that
$$ \left\langle L'(u+t_nv), w\right\rangle \to \left\langle L'(u+tv), w\right\rangle.$$We finally prove that $L'$ is coercitive, that is,
$$\dfrac{\left\langle L'(u), u\right\rangle}{\|u\|_{2, G}} \to \infty, \quad \text{as }\|u\|_{2, G}\to \infty.$$This is a consequence of the following inequalities for $\|u\|_{2, G}$ large
\begin{equation}
\left\langle L'(u), u\right\rangle \geq p^{-}\rho_G(\Delta u)\geq p^{-} \|u\|^{p^{-}}_{2, G}.
\end{equation}Therefore, by \cite[Theorem 26.A]{Z}, the proposition follows. 
\end{proof}

\section{Eigenvalue problem for the biharmonic $g$-Laplacian}\label{g}

In this section, we consider the following eigenvalue problem: given $r > 0$, find a function $u_r$ and $\lambda_r\in \mathbb{R}$ satisfying
\begin{equation}\label{eigenvalue problem}
\begin{cases}\Delta_g^2 u_r = \lambda_r \dfrac{g(|u_r|)}{|u_r|}u_r,\qquad  \text{in }\Omega\\
u_r\in  W^{2, G}_0(\Omega)\\
\int_\Omega G(u_r)\,dx = r.\end{cases} 
\end{equation}As the biharmonic $g$-Laplacian is not homogeneous, the eigenvalues and eigenfunctions depend on the normallization condition
$$ \int_\Omega G(u_r)\,dx = r.$$

\begin{theorem}
Let $G$ be an $N$-function satisfying \eqref{G1}. Given $r> 0$, let
\begin{equation}\label{inf problem}
c_r:=\inf\left\lbrace \int_\Omega G(\Delta u)\,dx: u\in  W^{2, G}_0(\Omega), \int_\Omega G(u)\,dx=r \right\rbrace.
\end{equation}
Then, there exists $u_r\in  W^{2, G}_0(\Omega)$ such that:
$$\int_\Omega G(\Delta u)\,dx= c_r.$$
\end{theorem}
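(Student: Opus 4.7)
The plan is to apply the direct method of the calculus of variations. The admissible set is nonempty: given any $\varphi \in C_0^\infty(\Omega) \setminus \{0\}$, the map $t \mapsto \rho_G(t\varphi)$ is continuous, vanishes at $t=0$ and tends to $\infty$ as $t \to \infty$, so by the intermediate value theorem we can normalize $\varphi$ to land on the constraint. Hence $c_r \in [0,\infty)$.

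Next I would take a minimizing sequence $\{u_n\} \subset W^{2,G}_0(\Omega)$ with $\rho_G(u_n) = r$ and $\rho_{2,G}(u_n) \to c_r$. Using Lemma \ref{comp norm modular}, the bound $\xi^-(\|u_n\|_{2,G}) \leq \rho_{2,G}(u_n) \leq c_r + 1$ for $n$ large forces $\|u_n\|_{2,G}$ to be bounded (since $\xi^-(t) \to \infty$ as $t \to \infty$). Because $G$ and $\widetilde{G}$ both satisfy $\Delta_2$, the space $W^{2,G}_0(\Omega)$ is reflexive, so up to a subsequence $u_n \rightharpoonup u_r$ in $W^{2,G}_0(\Omega)$ and in particular $\Delta u_n \rightharpoonup \Delta u_r$ in $L^G(\Omega)$.

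Now I would show $u_r$ lies in the constraint set and realises the infimum. For the constraint, I invoke the compact embedding in Theorem \ref{compact theorm}: since $G \ll G^*_2$ near infinity (the Sobolev conjugate grows essentially faster), the embedding $W^{2,G}_0(\Omega) \hookrightarrow L^G(\Omega)$ is compact, so $u_n \to u_r$ strongly in $L^G(\Omega)$. Under the $\Delta_2$-condition, norm convergence is equivalent to modular convergence, hence $\rho_G(u_n) \to \rho_G(u_r)$ and therefore $\rho_G(u_r) = r$. For the infimum, the functional $u \mapsto \rho_{2,G}(u) = \int_\Omega G(\Delta u)\,dx$ is convex (since $G$ is convex) and strongly continuous on $W^{2,G}_0(\Omega)$, hence weakly lower semicontinuous. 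Thus
\[
\rho_{2,G}(u_r) \leq \liminf_{n \to \infty} \rho_{2,G}(u_n) = c_r,
\]
and since $u_r$ is admissible, the reverse inequality is immediate and equality holds.

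The only delicate point is making sure the constraint passes to the weak limit, which really is the heart of the argument and depends crucially on the compactness of the embedding $W^{2,G}_0(\Omega) \hookrightarrow L^G(\Omega)$ together with the $\Delta_2$-condition to upgrade strong norm convergence to modular convergence. Every other step (nonemptiness, boundedness of the minimizing sequence, extraction of a weak limit, weak lower semicontinuity of a convex continuous functional) is routine once the reflexivity of $W^{2,G}_0(\Omega)$ and Lemma \ref{comp norm modular} are in hand.
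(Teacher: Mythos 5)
Your proposal is correct and follows essentially the same direct-method argument as the paper: a bounded minimizing sequence, reflexivity to extract a weak limit, weak lower semicontinuity of the modular $\rho_{2,G}$, and the compact embedding $W^{2,G}_0(\Omega)\hookrightarrow L^G(\Omega)$ to pass the constraint to the limit. The only cosmetic difference is that the paper verifies $\rho_G(u_n)\to\rho_G(u_r)$ by a direct mean-value/H\"older estimate, whereas you invoke the standard $\Delta_2$-fact that the modular is continuous along norm-convergent sequences; both yield the same conclusion.
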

\begin{proof}
Take a minimizing sequence $u_n\in  W^{2, G}_0(\Omega)$ for $c_r$, that is:
$$\int_\Omega G(u_n)\,dx=r, \quad \int_\Omega G(\Delta u_n)\,dx\to c_r,$$as $n\to \infty$. Hence
$$ \int_\Omega G(\Delta u_n)\,dx \leq C, \quad \text{for all n}$$and so by Lemma \ref{comp norm modular}, $u_n$ is bounded in $ W^{2, G}_0(\Omega)$. Hence, there is $u_r\in  W^{2, G}_0(\Omega)$ and a subsequence of $u_n$, still denoted by $u_n$, such that
$$u_n\rightharpoonup u_r\qquad\mbox{ in } W^{2, G}_0(\Omega)$$In particular, $\Delta u_n \rightharpoonup \Delta u_r$ in $L^G(\Omega)$. Since the modular $\rho_G$ is sequentially lower semi-continuous, we get
\begin{equation*}
\int_\Omega G(\Delta u_r)\,dx\leq \liminf_{n\to \infty}\int_\Omega G(\Delta u_n)\,dx=c_r.
\end{equation*}Hence, to conclude the proof, we just need to show that $u_r$ satisfies the constrain 
$$\int_\Omega G(u_r)\,dx=r.$$
By the compact Theorem \ref{compact theorm}, $u_n \to u_r$ strongly in $L^G(\Omega)$. Hence,
\begin{equation}\label{seq lower}
\begin{split}
\bigg|\int_\Omega (G(u_r)-G(u_n))\,dx\bigg| &= \bigg|\int_\Omega \int_0^1 g(tu_r+(1-t)u_n)(u_r-u_n)\,dt\,dx \bigg| \\ &
\leq \int_\Omega g(|u_r|+|u_n|)|u_r-u_n|\,dx \bigg|\\ & \leq C\|g(|u_r| + |u_n|)\|_{\tilde{G}}\|u_r-u_n\|_G \\ & \leq C\|u_r-u_n\|_G \to 0, \quad \text{as }n\to \infty.
\end{split}
\end{equation}Then, $u_r$ satisfies the normalization condition and so it solves \eqref{inf problem}. 
\end{proof}

\begin{remark}\label{remark positivity of ur}
Observe that $c_r > 0$, since if $c_r=0$, then
$$\Delta u_r= 0 \text{ in }\Omega,$$and so
$$\|u_r\|_{2, G}=\|\Delta u_r\|_G=0$$which yields $u_r=0$. This clearly contradicts the constraint
$$\int_\Omega G(u_r)\,dx=r>0.$$ 
\end{remark}

We next show that $u_r$ is indeed a solution of the eigenvalue problem \eqref{eigenvalue problem}. Instead of applying Lagrange Multipliers, we employ a method from \cite{MT} which can be applied even if the $N$-functions do not satisfy the $\Delta_2$-condition. 

We start by quoting the following technical lemma from \cite{MT}.

\begin{lemma}\label{auxiliary lemma}
Let $u, v \in L^G(\Omega)$ such that 
$$\int_\Omega \dfrac{g(|u|)}{|u|}uv\,dx \neq 0.$$Then, the condition
$$\int_\Omega G((1-\varepsilon)u+\delta v)\,dx=\int_\Omega G(u)\,dx,$$defines a continuously differentiable function $\delta=\delta(\varepsilon)$ in some interval $(-\varepsilon_0, \varepsilon_0)$, with $\varepsilon_0
>0$. Moreover, $\delta(0)=0$ and
$$\delta'(0)= \dfrac{\int_\Omega \dfrac{g(|u|)}{|u|}uu\,dx}{\int_\Omega \dfrac{g(|u|)}{|u|}uv\,dx}.$$
\end{lemma}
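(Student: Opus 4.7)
The plan is to apply the implicit function theorem to the auxiliary function
\[
F(\varepsilon,\delta):=\int_\Omega G\bigl((1-\varepsilon)u+\delta v\bigr)\,dx-\int_\Omega G(u)\,dx,
\]
defined on a neighbourhood of $(0,0)\in\mathbb{R}^2$. Clearly $F(0,0)=0$, so the whole argument reduces to proving that $F\in C^1$ near the origin and computing the two partial derivatives at $(0,0)$.

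Since $g$ is odd, the identity $G'(t)=g(t)=\tfrac{g(|t|)}{|t|}t$ holds, and formal differentiation under the integral sign would give
\[
\tfrac{\partial F}{\partial\varepsilon}(\varepsilon,\delta)=-\int_\Omega \tfrac{g(|w_{\varepsilon,\delta}|)}{|w_{\varepsilon,\delta}|}\,w_{\varepsilon,\delta}\,u\,dx,\qquad
\tfrac{\partial F}{\partial\delta}(\varepsilon,\delta)=\int_\Omega \tfrac{g(|w_{\varepsilon,\delta}|)}{|w_{\varepsilon,\delta}|}\,w_{\varepsilon,\delta}\,v\,dx,
\]
where $w_{\varepsilon,\delta}:=(1-\varepsilon)u+\delta v$. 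To justify this rigorously I would invoke dominated convergence: for $(\varepsilon,\delta)$ in a bounded neighbourhood of $(0,0)$ one has $|w_{\varepsilon,\delta}|\le C(|u|+|v|)$, hence by monotonicity of $g$ and the $\Delta_2$-condition on $G$,
\[
\bigl|g(|w_{\varepsilon,\delta}|)\,u\bigr|\le g\bigl(C(|u|+|v|)\bigr)\bigl(|u|+|v|\bigr).
\]
Combining Young's inequality \eqref{2.5} with Lemma \ref{G g} and the $\Delta_2$-condition on $G$ (which also guarantees $\widetilde{G}(g(t))\le (p^+-1)G(t)$), the right-hand side is dominated by a constant multiple of $G(|u|+|v|)\in L^1(\Omega)$, using $\Delta_2$ once more to split $G(|u|+|v|)\le C(G(u)+G(v))$. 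The same domination yields continuity of the partial derivatives with respect to $(\varepsilon,\delta)$, so $F\in C^1$ near $(0,0)$.

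With $F$ being $C^1$, the evaluation at the origin gives
\[
\tfrac{\partial F}{\partial\varepsilon}(0,0)=-\int_\Omega \tfrac{g(|u|)}{|u|}u\,u\,dx,\qquad
\tfrac{\partial F}{\partial\delta}(0,0)=\int_\Omega \tfrac{g(|u|)}{|u|}u\,v\,dx,
\]
and the latter is nonzero by hypothesis. The implicit function theorem then furnishes $\varepsilon_0>0$ and a $C^1$ function $\delta=\delta(\varepsilon)$ on $(-\varepsilon_0,\varepsilon_0)$ with $\delta(0)=0$ and $F(\varepsilon,\delta(\varepsilon))\equiv0$, i.e.\ the desired normalization identity. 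The IFT formula
\[
\delta'(0)=-\frac{\partial F/\partial\varepsilon(0,0)}{\partial F/\partial\delta(0,0)}
=\frac{\int_\Omega \tfrac{g(|u|)}{|u|}u\,u\,dx}{\int_\Omega \tfrac{g(|u|)}{|u|}u\,v\,dx}
\]
yields the claimed value of $\delta'(0)$.

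The only non-routine point is the $C^1$ regularity of $F$; specifically, producing an $L^1$-majorant for the integrands defining $\partial F/\partial\varepsilon$ and $\partial F/\partial\delta$ on a fixed small neighbourhood of $(0,0)$. Everything else is a direct application of the implicit function theorem and the pointwise differentiation of $G$.
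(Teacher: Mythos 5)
Your proof is correct. Note that the paper itself does not prove this lemma: it is quoted from the reference \cite{MT} without proof, so there is no internal argument to compare against. That said, the implicit function theorem applied to $F(\varepsilon,\delta)=\int_\Omega G((1-\varepsilon)u+\delta v)\,dx-\int_\Omega G(u)\,dx$ is the natural (and essentially the only reasonable) route, and your execution is sound: $F(0,0)=0$, the formal partial derivatives are right since $g$ odd gives $g(t)=\tfrac{g(|t|)}{|t|}t$, and the nondegeneracy condition $F_\delta(0,0)\neq 0$ is precisely the hypothesis. The only substantive point, as you identify, is the $C^1$ regularity of $F$ via a uniform $L^1$-majorant on a fixed neighbourhood of the origin; your chain (Young's inequality \eqref{2.5}, then $\widetilde{G}(g(t))\le(p^+-1)G(t)$ from Lemma~\ref{G g}, then two applications of $\Delta_2$ to pass from $G(C(|u|+|v|))$ down to $C'(G(u)+G(v))\in L^1$) closes that gap correctly. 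One small remark: the same domination could be obtained more directly from the right inequality in \eqref{G1}, namely $s g(s)\le p^+G(s)$ with $s=C(|u|+|v|)$, followed by $\Delta_2$; this avoids invoking the complementary function. Also worth flagging is that your argument uses $\Delta_2$ essentially, which the paper has standing as a global hypothesis, whereas the original lemma in \cite{MT} is proved in a setting where $\Delta_2$ is not assumed; so your proof is a valid proof in this paper's framework but is likely not identical to the proof in the cited source.
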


Finally, we state that $u_r$ solves \eqref{eigenvalue problem}.

\begin{theorem}
Let $u_r$ be a solution of \eqref{inf problem}. Then, there is $\lambda_r>0$ such that
\begin{equation}\label{eq objetivo}
\Delta_g^2 u_r = \lambda_r \dfrac{g(|u_r|)}{|u_r|}u_r\quad \text{weakly in }\Omega.
\end{equation}

\end{theorem}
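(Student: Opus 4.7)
The plan is to adapt the perturbation method from \cite{MT}, using Lemma \ref{auxiliary lemma} as a substitute for Lagrange multipliers, since the normalization constraint is defined via the modular $\rho_G$ (which need not be smooth in a strong sense) and the ambient space, although reflexive under our $\Delta_2$ hypothesis, is handled uniformly here.

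First, I would fix an arbitrary $v\in W^{2,G}_0(\Omega)$ satisfying
\[
\int_\Omega \frac{g(|u_r|)}{|u_r|}u_r v\,dx \neq 0,
\]
which by Lemma \ref{auxiliary lemma} yields a $C^1$ function $\delta=\delta(\varepsilon)$, defined on some $(-\varepsilon_0,\varepsilon_0)$, with $\delta(0)=0$, such that the perturbed family
\[
u_\varepsilon := (1-\varepsilon)u_r + \delta(\varepsilon)v
\]
obeys $\int_\Omega G(u_\varepsilon)\,dx = r$. Thus each $u_\varepsilon$ is admissible in \eqref{inf problem}, so the scalar function $\varphi(\varepsilon):=\int_\Omega G(\Delta u_\varepsilon)\,dx$ attains its minimum at $\varepsilon=0$.

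Next, I would differentiate $\varphi$ at $\varepsilon=0$. Since $L\in C^1(W^{2,G}_0(\Omega),\mathbb{R})$ by Proposition \ref{properties biharmonic}, and $\varepsilon\mapsto u_\varepsilon$ is $C^1$ into $W^{2,G}_0(\Omega)$, the chain rule and the identity $\dot u_0=-u_r+\delta'(0)v$ give
\[
\varphi'(0) = -\int_\Omega g(|\Delta u_r|)|\Delta u_r|\,dx + \delta'(0)\int_\Omega \frac{g(|\Delta u_r|)}{|\Delta u_r|}\Delta u_r\,\Delta v\,dx = 0.
\]
Substituting the explicit value of $\delta'(0)$ supplied by Lemma \ref{auxiliary lemma} and rearranging, I obtain the weak Euler--Lagrange identity
\[
\int_\Omega \frac{g(|\Delta u_r|)}{|\Delta u_r|}\Delta u_r\,\Delta v\,dx \;=\; \lambda_r \int_\Omega \frac{g(|u_r|)}{|u_r|}u_r v\,dx,
\qquad
\lambda_r := \frac{\int_\Omega g(|\Delta u_r|)|\Delta u_r|\,dx}{\int_\Omega g(|u_r|)|u_r|\,dx}.
\]
Both integrals defining $\lambda_r$ are strictly positive: the denominator because $u_r\neq 0$ by the constraint $\rho_G(u_r)=r>0$, and the numerator by Remark \ref{remark positivity of ur}, giving $\lambda_r>0$.

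The main obstacle is extending this identity from the ``generic'' class of test functions above to every $v\in W^{2,G}_0(\Omega)$, since Lemma \ref{auxiliary lemma} requires a non-vanishing denominator. To overcome this, I would note that the linear functional $T(v):=\int_\Omega \tfrac{g(|u_r|)}{|u_r|}u_r v\,dx$ is not identically zero because $T(u_r)=\int_\Omega g(|u_r|)|u_r|\,dx>0$; hence $\{v:T(v)\neq 0\}$ is dense in $W^{2,G}_0(\Omega)$. Equivalently, for an arbitrary $v$ I can write $v=(v+u_r)-u_r$, apply the previous step to both $v+u_r$ (for which $T$ evaluates to $T(v)+T(u_r)\neq 0$ after possibly replacing $v$ by $v+tu_r$ with $t$ outside a discrete set) and to $u_r$ separately, and subtract. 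Both sides of the desired identity are continuous linear functionals of $v$ (the left-hand side via the boundedness of $L'$ established in Proposition \ref{properties biharmonic}, the right-hand side via Hölder's inequality in Orlicz spaces together with Lemma \ref{G g}), so the identity extends to all of $W^{2,G}_0(\Omega)$. This gives \eqref{eq objetivo} in the weak sense.
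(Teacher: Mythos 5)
Your proof is correct, but it takes a genuinely different route from the paper. The paper avoids ever differentiating the energy along the constrained path: it defines the positive cones $P_F$ and $P_H$ of the two linear functionals, establishes the inclusion $P_F\subset P_H$ (hence $\operatorname{Ker}H\subset\operatorname{Ker}F$) using only the \emph{one-sided} difference quotient $\int_\Omega \frac{G(|\Delta v_\varepsilon|)-G(|\Delta u_r|)}{\delta(\varepsilon)}\,dx\ge 0$ together with a careful dominated-convergence passage to the limit $\varepsilon\to 0^+$, and then invokes \cite[Prop.~43.1]{Z} to produce the multiplier $\lambda_r$ abstractly. You instead observe that since $L\in C^1$ and $\varepsilon\mapsto u_\varepsilon$ is $C^1$, the composition $\varphi(\varepsilon)=L(u_\varepsilon)$ is differentiable and has an \emph{interior} minimum at $\varepsilon=0$, giving the exact identity $\varphi'(0)=0$; inserting $\delta'(0)$ from Lemma~\ref{auxiliary lemma} yields an explicit formula $\lambda_r=\int_\Omega g(|\Delta u_r|)|\Delta u_r|\,dx\,/\,\int_\Omega g(|u_r|)|u_r|\,dx$, and you close by a linearity/density step. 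Your approach is shorter and produces $\lambda_r$ in closed form, but it uses the full strength of $\Delta_2$ (through $L\in C^1$ and the chain rule). The paper deliberately follows the method of \cite{MT}, which is engineered to survive without $\Delta_2$; it pays for this robustness with the hands-on dominated-convergence estimate and the appeal to Zeidler. In the present paper both approaches are legitimate since $\Delta_2$ is assumed globally. One small remark: your extension step is essentially the algebraic observation that a linear identity $H(v)=\lambda_r T(v)$ valid whenever $T(v)\neq 0$ automatically propagates to $T(v)=0$ by writing $v=(v+u_r)-u_r$; continuity of $H$ and $T$ is not actually needed for this, though it does no harm. The phrasing ``after possibly replacing $v$ by $v+tu_r$ with $t$ outside a discrete set'' should be tightened to address the single exceptional value $t=-T(v)/T(u_r)$, but the idea is sound.
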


\begin{proof}
Define linear functionals $F:  W^{2, G}_0(\Omega)\to \mathbb{R}$ and $G:  W^{2, G}_0(\Omega)\to \mathbb{R}$ by:
$$F(v)= \int_\Omega \dfrac{g(|u_r|)}{|u_r|}u_rv\,dx,$$
$$H(v)= \int_\Omega \dfrac{g(|\Delta u_r|)}{|\Delta u_r|}\Delta u_r \Delta v\,dx.$$We shall prove that Ker H $\subset $ Ker F, which will show by \cite[Proposition 43.1]{Z} that there is $\lambda_r \in \mathbb{R}$ such that \eqref{eq objetivo} holds. Let 
$$P_F:= \left\lbrace v\in  W^{2, G}_0(\Omega): F(v)>0\right\rbrace$$and
$$P_H:= \left\lbrace v\in  W^{2, G}_0(\Omega): H(v)>0\right\rbrace.$$We will prove that $P_F\subset P_H$. Let $v\in P_F$. Then, 
$$\int_\Omega \dfrac{g(|u_r|)}{|u_r|}u_rv\,dx\neq 0.$$From Lemma \ref{auxiliary lemma}, there are $\varepsilon_0> 0$ and a $C^1((-\varepsilon_0, \varepsilon_0))$-function $\delta=\delta(\varepsilon)$ such that
$$\int_\Omega G((1-\varepsilon)u_r+\delta v)\,dx = r, \quad \text{for all }\varepsilon\in (-\varepsilon_0, \varepsilon_0).$$By Lemma \ref{auxiliary lemma}, we get $\delta'(0)> 0$, so 
$$\dfrac{1}{2}\delta'(0)< \delta'(\varepsilon)<2\delta'(0),$$for all $\varepsilon$ small. Hence, for $\varepsilon> 0$ small enough,
\begin{equation}\label{ineq delta epsilon}
\dfrac{1}{2}\delta'(0)<\dfrac{\delta(\varepsilon)}{\varepsilon} <2\delta'(0).
\end{equation}Next, denote $v_\varepsilon= (1-\varepsilon)u_r+\delta(\varepsilon)v$. Since $u_r$ solves \eqref{inf problem}, there holds
\begin{equation}\label{dominated 1}
\int_\Omega \dfrac{G(|\Delta v_\varepsilon|)-G(|\Delta u_r|)}{\delta(\varepsilon)}\,dx \geq 0.
\end{equation}Also, 
\begin{equation*}
\begin{split}
I_\varepsilon:=\dfrac{G(|\Delta v_\varepsilon|)-G(|\Delta u_r|)}{\delta(\varepsilon)}& = \dfrac{G(|\Delta v_\varepsilon|)-G(|\Delta u_r|)}{|\Delta v_\varepsilon|-|\Delta u_r|}\dfrac{|\Delta v_\varepsilon|^2-|\Delta u_r|^2}{(|\Delta v_\varepsilon|+|\Delta u_r|)\delta(\varepsilon)}\\& =\dfrac{G(|\Delta v_\varepsilon|)-G(|\Delta u_r|)}{|\Delta v_\varepsilon|-|\Delta u_r|}\dfrac{|\Delta v_\varepsilon|-|\Delta u_r|}{\varepsilon}\dfrac{\varepsilon}{\delta(\varepsilon)}
\end{split}
\end{equation*}When $\varepsilon\to 0^+$, since $\Delta v_\varepsilon\to \Delta u_r$ a.e. in $\Omega$, it follows
\begin{equation*}
\dfrac{G(|\Delta v_\varepsilon|)-G(|\Delta u_r|)}{|\Delta v_\varepsilon|-|\Delta u_r|}\to  g(|\Delta u_r|).
\end{equation*}Moreover,
$$\dfrac{\varepsilon}{\delta(\varepsilon)}\to \dfrac{1}{\delta'(0)}, \text{ as }\varepsilon\to 0^{+}.$$Finally, letting
$$\Phi(\varepsilon)=|(1-\varepsilon)\Delta u_r+\delta(\varepsilon)v|,$$we get
$$\dfrac{|\Delta v_\varepsilon|-|\Delta u_r|}{\varepsilon}=\dfrac{\Phi(\varepsilon)-\Phi(0)}{\varepsilon}\to \Phi'(0)=\dfrac{\Delta u_r}{|\Delta u_r|}\left(-\Delta u_r + \delta'(0)\Delta v \right).$$Therefore,
\begin{equation}\label{dominated 2}
I_\varepsilon \to g(|\Delta u_r|)\left(-\dfrac{1}{\delta'(0)}|\Delta u_r| + \dfrac{\Delta u_r \Delta v}{|\Delta u_r|} \right),
\end{equation}a.e. in $\Omega$ when $\varepsilon\to 0^+$. In addition,
\begin{equation*}
\begin{split}
\bigg|\dfrac{G(|\Delta v_\varepsilon|)-G(|\Delta u_r|)}{\delta(\varepsilon)}\bigg|& \leq \left(g(|\Delta v_\varepsilon|)+g(|\Delta u_r|) \right)\dfrac{|\Delta v_\varepsilon-\Delta u_r|}{\delta(\varepsilon)}\\ & \leq  \left(g(|\Delta u_r| +\|\delta(\varepsilon)\|_{L^{\infty}([-\varepsilon_0/2, \varepsilon_0/2])}|\Delta v| \right)\left(\dfrac{\varepsilon}{\delta(\varepsilon)}|\Delta u_r| + |\Delta v| \right)\\ & \leq  \left(g(|\Delta u_r| +\|\delta(\varepsilon)\|_{L^{\infty}([-\varepsilon_0/2, \varepsilon_0/2])}|\Delta v| \right)\left(\dfrac{2}{\delta'(0)}|\Delta u_r| + |\Delta v| \right)\in L^1(\Omega),
\end{split}
\end{equation*}where we have used \eqref{ineq delta epsilon} and H\"{o}lder's inequality. Therefore, by dominated convergence theorem, we conclude taking the limit as $\varepsilon\to 0^+$ in \eqref{dominated 1} and recalling \eqref{dominated 2}, that
\begin{equation*}
\int_\Omega \dfrac{g(|\Delta u_r|)}{|\Delta u_r|}\Delta u_r \Delta v\,dx\geq \dfrac{1}{\delta'(0)}\int_\Omega g(|\Delta u_r|)|\Delta u_r|\,dx > 0.
\end{equation*}Observe that the last inequality follows from Remark \ref{remark positivity of ur}. Hence, $v\in P_H$. This concludes the proof. 

\end{proof}

The following result gives a straightforward lower bound of $\lambda_r$ in terms of $c_r$, using the condition \eqref{G1}.
\begin{proposition}
For any $r>0$, the corresponding eigenvalue $\lambda_r$ satisfies the lower bound
$$\lambda_r \geq \dfrac{p^- c_r}{rp^+}.$$
\end{proposition}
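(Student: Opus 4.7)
The plan is to test the weak formulation of the eigenvalue equation against $v = u_r$ itself and then apply the two-sided bound \eqref{G1} separately to the two resulting integrals.

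First, since $u_r \in W^{2,G}_0(\Omega)$ is an admissible test function, the weak formulation of \eqref{eigenvalue problem} gives
\begin{equation*}
\int_\Omega \frac{g(|\Delta u_r|)}{|\Delta u_r|} \Delta u_r \,\Delta u_r\,dx \;=\; \lambda_r \int_\Omega \frac{g(|u_r|)}{|u_r|} u_r \, u_r\,dx,
\end{equation*}
which simplifies to
\begin{equation*}
\int_\Omega g(|\Delta u_r|)|\Delta u_r|\,dx \;=\; \lambda_r \int_\Omega g(|u_r|)|u_r|\,dx.
\end{equation*}

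Next, I would apply condition \eqref{G1} in the form $p^- G(t) \leq t g(t) \leq p^+ G(t)$ to each side. The left-hand side is bounded below using the lower bound together with the fact that $u_r$ achieves the infimum $c_r$:
\begin{equation*}
\int_\Omega g(|\Delta u_r|)|\Delta u_r|\,dx \;\geq\; p^- \int_\Omega G(\Delta u_r)\,dx \;=\; p^- c_r.
\end{equation*}
The right-hand side is bounded above via the upper bound together with the normalization $\int_\Omega G(u_r)\,dx = r$:
\begin{equation*}
\lambda_r \int_\Omega g(|u_r|)|u_r|\,dx \;\leq\; \lambda_r \, p^+ \int_\Omega G(u_r)\,dx \;=\; \lambda_r \, p^+ r.
\end{equation*}

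Combining the two estimates yields $p^- c_r \leq \lambda_r p^+ r$, and dividing by $p^+ r > 0$ gives the desired inequality. No substantial obstacle is expected: both bounds are direct consequences of \eqref{G1} applied pointwise and integrated, and positivity of $r$ and $p^+$ make the final division harmless. The only subtlety is to note that $\lambda_r$ is indeed non-negative (so the direction of the inequality is preserved), which follows from the previous theorem together with Remark \ref{remark positivity of ur}.
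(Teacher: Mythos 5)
Your proof is correct and follows the same route as the paper: test the weak formulation with $v=u_r$, apply \eqref{G1} in the form $p^-G(t)\leq tg(t)\leq p^+G(t)$ to each side, and then invoke the normalization $\int_\Omega G(u_r)\,dx=r$ together with $\int_\Omega G(\Delta u_r)\,dx=c_r$. The remark on positivity of $\lambda_r$ (from Remark \ref{remark positivity of ur}) is a reasonable added comment but not strictly needed, since the displayed chain of inequalities already yields the bound directly.
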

 \begin{proof}
 To prove the proposition, take $v=u_r$ in the definition of weak solution to \eqref{eigenvalue problem}. Then,
 $$\int_\Omega g(|\Delta u_r|)|\Delta u_r|\,dx=\lambda_r\int_\Omega g(| u_r|)| u_r|\,dx.$$By  \eqref{G1}, we get
 $$p^{-}\int_\Omega G(\Delta u_r)\,dx \leq  \int_\Omega g(|\Delta u_r|)|\Delta u_r|\,dx =\lambda_r\int_\Omega g(| u_r|)| u_r|\,dx \leq \lambda_r p^+\int_\Omega G(u_r)\,dx.$$The proof follows by recalling that $\int_\Omega G(u_r)\,dx=r$ and the definition of $c_r$ \eqref{inf problem}.
 \end{proof}

The objective of the next result is to bound from below the eigenvalues of $\Delta^2_g$ in terms of the variational quantity:
$$\lambda_0:=\inf_{u\in  W^{2, G}_0(\Omega), u\neq 0}\dfrac{\int_\Omega g(|\Delta u|)|\Delta u|\,dx}{\int_\Omega g(|u|)|u|\,dx}.$$

\begin{proposition}
Any eigenvalue $\lambda_r$ of \eqref{eigenvalue problem} satisfies
$$\lambda_r \in [\lambda_0, \infty).$$
\end{proposition}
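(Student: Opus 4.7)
The plan is to test the weak formulation of the eigenvalue equation with the eigenfunction itself and then recognise the resulting ratio as an admissible quotient in the definition of $\lambda_0$. Concretely, if $u_r \in W^{2,G}_0(\Omega)$ is an eigenfunction associated with $\lambda_r$, then choosing $v = u_r$ in the weak formulation of \eqref{eigenvalue problem} gives
\begin{equation*}
\int_\Omega \frac{g(|\Delta u_r|)}{|\Delta u_r|}\Delta u_r \,\Delta u_r \, dx \;=\; \lambda_r \int_\Omega \frac{g(|u_r|)}{|u_r|} u_r \, u_r \, dx,
\end{equation*}
which simplifies to
\begin{equation*}
\int_\Omega g(|\Delta u_r|)|\Delta u_r|\, dx \;=\; \lambda_r \int_\Omega g(|u_r|)|u_r|\, dx.
\end{equation*}

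Next I would observe that $u_r \neq 0$ (this is guaranteed by the normalisation $\int_\Omega G(u_r)\,dx = r > 0$, cf.\ Remark \ref{remark positivity of ur}), so $u_r$ is admissible in the infimum defining $\lambda_0$. Moreover, since $g(t)t > 0$ for $t > 0$ and $u_r \not\equiv 0$, the denominator $\int_\Omega g(|u_r|)|u_r|\, dx$ is strictly positive. Therefore we may solve for $\lambda_r$ and compare with the Rayleigh-type quotient:
\begin{equation*}
\lambda_r \;=\; \frac{\int_\Omega g(|\Delta u_r|)|\Delta u_r|\, dx}{\int_\Omega g(|u_r|)|u_r|\, dx} \;\geq\; \inf_{\substack{u \in W^{2,G}_0(\Omega) \\ u \neq 0}} \frac{\int_\Omega g(|\Delta u|)|\Delta u|\, dx}{\int_\Omega g(|u|)|u|\, dx} \;=\; \lambda_0.
\end{equation*}

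Since the upper bound $\lambda_r < \infty$ is trivial (any eigenvalue obtained from the constrained minimisation is a real number), this establishes $\lambda_r \in [\lambda_0, \infty)$. There is no real obstacle here: the result is essentially a tautology once one tests with the eigenfunction itself, and the only subtle point is the non-vanishing of the denominator, which follows from the positivity of $u_r$ imposed by the normalisation $\int_\Omega G(u_r)\,dx = r$. I would not need any extra hypotheses beyond \eqref{G1} and the existence results already established earlier in the section.
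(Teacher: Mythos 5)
Your proof is correct and takes essentially the same approach as the paper: test the weak formulation with $v = u_r$, obtain $\lambda_r = \int_\Omega g(|\Delta u_r|)|\Delta u_r|\,dx \big/ \int_\Omega g(|u_r|)|u_r|\,dx$, and compare with the infimum defining $\lambda_0$. The paper merely phrases this as a proof by contradiction (assuming $\lambda_r < \lambda_0$ and deriving a contradiction), while you argue directly; the mathematical content is identical.
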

\begin{proof}
Suppose that
\begin{equation}\label{ineq lambdas}
\lambda_r< \lambda_0
\end{equation}for some $r> 0.$ Then, we know that there is $u_r\in  W^{2, G}_0(\Omega)$, $\int_\Omega G(u_r)\,dx=1$, such that
$$\int_\Omega \dfrac{g(|\Delta u_r|)}{|\Delta u_r|}\Delta u_r\Delta v\,dx = \lambda_r\int_\Omega\dfrac{g(| u_r|)}{| u_r|} u_r v\,dx,$$for all $v\in  W^{2, G}_0(\Omega)$. In particular, taking $v=u_r$, we get
$$\int_\Omega g(|\Delta u_r|)|\Delta u_r|\,dx= \lambda_r\int_\Omega g(|u_r|)|u_r|\,dx$$which, in view of \eqref{ineq lambdas}, contradicts the definition of $\lambda_0$. 
\end{proof}

\section{Nonlinear eigenvalue problem with two Orlicz functions}\label{b}
Here, we are concerned with an eigenvalue problem driven by two $N$-functions $G$ and $B$. We will see that depending on the relative growth of $G$ and $B$, there is a continuous spectrum concentrated around $0$, around $\infty$ or that coincides with the whole interval $(0, \infty)$.

In any case, we let $G$ and $B$ be two $N$-functions satisfying \eqref{G1}, and we consider the following nonlinear eigenvalue problem
\begin{equation}\label{problem eigenvalue}
\begin{cases}\Delta_g^2 u = \lambda \dfrac{b(|u|)}{|u|}u,\qquad  \text{in }\Omega\\
u\in  W^{2, G}_0(\Omega),
\end{cases} 
\end{equation}where $b=B'$. Given the $\Delta_2$-exponents:
$$p^-:=\inf_{t>0}\dfrac{tg(t)}{G(t)},\,\, p^+:=\sup_{t>0}\dfrac{tg(t)}{G(t)},$$and the corresponding $p^{-}_B$ and $p_B^{+}$ for $B$, we will consider all relative growth behaviours  between $G$ and $B$ in terms of the above exponents, that is:
\begin{itemize}
\item Theorem \ref{eigenvalue}: $1<p_B^{-}<p^{-}<p_B^{+}<p^{+}$ and $1<p_B^{-}<p^{-}<p^{+}<p_B^{+}$;
\item Theorem \ref{eigenvalueinfty}: $1<p_B^{-}<p_B^{+}<p^{-}<p^{+}$;
\item Theorem \ref{eigenvalueinfty2}: $p^{-}<p_B^{-}<p^{+}<p_B^{+}$ and $p^{-}<p_B^{-}<p_B^{+}<p^{+}$;
\item Theorem  \ref{eigenvalue3}: $p^{-}<p^{+}<p_B^{-}<p_B^{+}$.
\end{itemize}

\begin{theorem}[Spectrum concentrated around $0$]\label{eigenvalue}
Let $G$ and $B$ be two $N$-functions satisfying \eqref{G1}. Moreover assume that $B \ll G^*$, that
\begin{equation}\label{hyp b}
1<p_B^{-}<p^{-}<p_B^{+},
\end{equation}and that there is $t_0>0$ such that
\begin{equation}\label{assump exponent}
p_B^{-}-1 \leq \dfrac{tb'(t)}{b(t)}, \quad \text{for all }t\in (0, t_0), \, b= B'.
\end{equation}Then, there is $\lambda^{*}>0$ such that any $\lambda\in (0, \lambda^{*})$ is an eigenvalue of the problem \eqref{problem eigenvalue}.
\end{theorem}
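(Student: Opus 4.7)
The plan is to find a nontrivial critical point of the $C^1$ energy
\[
J_\lambda(u) := \int_\Omega G(\Delta u)\,dx - \lambda \int_\Omega B(u)\,dx,
\qquad u \in W^{2,G}_0(\Omega),
\]
whose critical points are exactly the weak solutions of \eqref{problem eigenvalue} with eigenvalue $\lambda$. The $C^1$ regularity, and the compactness of $u \mapsto \int_\Omega B(u)\,dx$, follow from the compact embedding $W^{2,G}_0(\Omega) \hookrightarrow L^B(\Omega)$ given by $B \ll G^*$ and Theorem \ref{compact theorm}. The nontrivial critical point will be produced by a local mountain-pass geometry on a small ball, combined with Ekeland's variational principle and the $(S_+)$ property from Proposition \ref{properties biharmonic}.

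First I would verify the positive sphere condition: using Lemma \ref{comp norm modular} and the embedding $\|u\|_B \leq C\|u\|_{2,G}$, for $\|u\|_{2,G}=\rho$ small one gets $\int_\Omega G(\Delta u)\,dx \geq \rho^{p^+}$ and $\int_\Omega B(u)\,dx \leq C'\rho^{p_B^-}$, so
\[
J_\lambda(u) \geq \rho^{p^+} - \lambda\,C'\rho^{p_B^-} \quad\text{on } \|u\|_{2,G}=\rho.
\]
Fixing a small $\rho$ and setting $\lambda^{\ast} := \rho^{p^+-p_B^-}/(2C')$, every $\lambda\in(0,\lambda^{\ast})$ produces $J_\lambda \geq \tfrac12\rho^{p^+}=:a > 0$ on that sphere. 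The technical heart is then to construct $u_0$ with $\|u_0\|_{2,G}<\rho$ and $J_\lambda(u_0)<0$. Here assumption \eqref{assump exponent} is used to produce the sharp local lower bound $B(t)\gtrsim t^{p_B^-}$ on $(0,t_0)$; the pure modular inequalities from \eqref{G1} would only give the weaker exponent $p_B^+$ on this side. For $\phi \in C_c^\infty(\Omega)\setminus\{0\}$ and $t$ small enough that $t\|\phi\|_\infty \leq t_0$,
\[
J_\lambda(t\phi) \leq t^{p^-}\int_\Omega G(\Delta\phi)\,dx - \lambda\, c\, t^{p_B^-}\int_\Omega |\phi|^{p_B^-}\,dx,
\]
and because $p_B^- < p^-$ the negative term dominates for small $t$, while $\|t\phi\|_{2,G}<\rho$.

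With both geometric conditions in hand, $c:=\inf_{\bar B_\rho} J_\lambda < 0 < a \leq \inf_{\partial B_\rho} J_\lambda$, so the infimum on $\bar B_\rho$ cannot be attained on the boundary. Since $J_\lambda$ is continuous and bounded below on the bounded set $\bar B_\rho$, Ekeland's variational principle yields $(v_n)\subset \bar B_\rho$ with $J_\lambda(v_n)\to c$ and $\|J_\lambda'(v_n)\|_{(W^{2,G}_0(\Omega))'}\to 0$ (with $v_n$ eventually in the open ball). Extracting $v_n\rightharpoonup v$ weakly, the compact embedding $W^{2,G}_0\hookrightarrow L^B$ gives $\Psi'(v_n)\to\Psi'(v)$ strongly in the dual, where $\Psi(u):=\int_\Omega B(u)\,dx$. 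Combined with $J_\lambda'(v_n)=L'(v_n)-\lambda\Psi'(v_n)\to 0$, this forces $L'(v_n)\to\lambda\Psi'(v)$ strongly in $(W^{2,G}_0)'$, and therefore $\limsup_n \langle L'(v_n)-L'(v),v_n-v\rangle\leq 0$. The $(S_+)$ property from Proposition \ref{properties biharmonic} upgrades this to $v_n\to v$ strongly, after which continuity yields $J_\lambda(v)=c<0$ (so $v\neq 0$) and $J_\lambda'(v)=0$, i.e.\ $v$ is a nontrivial weak solution of \eqref{problem eigenvalue} with eigenvalue $\lambda$.

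The main obstacle is the interior-negativity step. Without any Ambrosetti–Rabinowitz condition one cannot run a standard Palais–Smale argument, and working only with \eqref{G1} applied to $B$ gives the comparison $J_\lambda(t\phi) \leq C_1 t^{p^-} - \lambda C_2 t^{p_B^+}$, which is positive near $0$ because $p^- < p_B^+$; thus no negative interior point can be obtained this way. Assumption \eqref{assump exponent} is precisely what replaces the exponent $p_B^+$ on the $B$-side by $p_B^-$, after which $p_B^- < p^-$ forces the desired negativity and the rest of the argument proceeds as above.
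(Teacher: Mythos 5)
Your proof is correct and follows essentially the same route as the paper's: establish positivity of $\Phi_\lambda$ on $\partial B_\rho$ for $\lambda$ small (Lemma~\ref{lemaa}), use \eqref{assump exponent} to get a sharpened lower bound on $B$ near $0$ and thereby a negative interior value (Lemma~\ref{lemb}), then run Ekeland's variational principle to produce a Palais–Smale-type sequence and use the $(S_+)$ property of $L'$ from Proposition~\ref{properties biharmonic} to pass to a strong limit. One small imprecision: from the monotonicity of $t\mapsto tb(t)/B(t)$ near $0$ one obtains $B(t)\gtrsim t^{p_B^-+\varepsilon}$ for any $\varepsilon>0$ (with $t_0$ depending on $\varepsilon$), not the stronger $B(t)\gtrsim t^{p_B^-}$ you state; this does not affect the conclusion, since the only thing needed in the interior-negativity step is that the exponent on the $B$-side be strictly below $p^-$, which \eqref{hyp b} guarantees for $\varepsilon$ small.
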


\begin{remark}Observe that due to the hypothesis \eqref{hyp b} on the exponents of $B$, we cannot guarantee that the right hand side term
$$\lambda  \dfrac{b(|u|)}{|u|}u$$satisfies the Ambrosetti-Rabinowitz condition (that is, there is $\mu > 2$ such that $\mu B(u)\leq ub(u)$). Then, although the associated energy functional has a mountain pass geometry, the Palais Smale condition may not hold. So, the mountain pass Theorem may not be possible to use in this context.

\end{remark}

Before giving the proof of Theorem \ref{eigenvalue}, we will prove some preliminary results. For any $\lambda> 0$, let define $\Phi_\lambda: W^{2, G}_0(\Omega)\to \mathbb{R}$ by
$$\Phi_\lambda(u):=\int_\Omega G(\Delta u)\,dx -\lambda\int_\Omega B(u)\,dx.$$

\begin{lemma}\label{lemaa}
Under the assumptions of Theorem \ref{eigenvalue}, there is $\lambda^{*}>0$ such that for any $\lambda\in (0, \lambda^{*})$, there are $\rho, \alpha>0$ such that
$$\Phi_\lambda(u)\geq \alpha, \quad\text{for any }u\in \partial B_\rho(0).$$
\end{lemma}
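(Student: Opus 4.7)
The plan is to bound $\Phi_\lambda$ from below on the sphere $\|u\|_{2,G}=\rho$ by playing the modular--norm comparison of Lemma \ref{comp norm modular} against the continuous embedding $W^{2,G}_0(\Omega)\hookrightarrow L^B(\Omega)$ furnished by the hypothesis $B\ll G^*$ together with Theorem \ref{compact theorm}. The exponent assumption $p_B^-<p^-\le p^+$ is the structural ingredient that makes such a local positivity inequality possible, once $\lambda$ is taken sufficiently small.

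First I would write $\Phi_\lambda(u)=\rho_{2,G}(u)-\lambda\,\rho_B(u)$ and apply Lemma \ref{comp norm modular} to obtain, for $\|u\|_{2,G}=\rho<1$,
$$\rho_{2,G}(u)\ \ge\ \xi^-(\rho)\ =\ \rho^{p^+}.$$
For the second term, the continuous embedding provides a constant $C_0>0$ with $\|u\|_B\le C_0\|u\|_{2,G}=C_0\rho$, and the analogue of Lemma \ref{comp norm modular} for the modular $\rho_B$ on $L^B(\Omega)$ (proved verbatim with the $\Delta_2$-exponents $p_B^\pm$ of $B$ in place of $p^\pm$) gives, whenever $C_0\rho\le 1$,
$$\rho_B(u)\ \le\ (C_0\rho)^{p_B^-}.$$
Combining the two estimates, for $\|u\|_{2,G}=\rho$ small,
$$\Phi_\lambda(u)\ \ge\ \rho^{p^+}-\lambda\,C_0^{p_B^-}\rho^{p_B^-}\ =\ \rho^{p_B^-}\bigl(\rho^{\,p^+-p_B^-}-\lambda\,C_0^{p_B^-}\bigr).$$

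Next I would fix once and for all a radius $\rho_0\in(0,\min\{1,1/C_0\})$ and set
$$\lambda^*:=\frac{\rho_0^{\,p^+-p_B^-}}{2\,C_0^{p_B^-}}.$$
For any $\lambda\in(0,\lambda^*)$ the bracketed factor evaluated at $\rho=\rho_0$ is at least $\rho_0^{\,p^+-p_B^-}/2$, so
$$\Phi_\lambda(u)\ \ge\ \frac{\rho_0^{\,p^+}}{2}\ =:\ \alpha\ >\ 0,\qquad u\in\partial B_{\rho_0}(0),$$
which yields the conclusion with $\rho:=\rho_0$ and $\alpha:=\rho_0^{p^+}/2$.

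The main subtlety is the \emph{direction} of the exponent inequality: because $p^+>p_B^-$, on the region $\rho<1$ the term that would normally dominate is the negative one ($\rho^{p_B^-}\gg\rho^{p^+}$), so one cannot fix $\lambda$ and shrink $\rho$ to make $\Phi_\lambda$ positive. The trick is that $\lambda$ and $\rho$ must be calibrated together, with $\lambda$ chosen small enough relative to a prescribed $\rho_0$ to annihilate the coefficient in front of $\rho_0^{p_B^-}$, and this is exactly what the definition of $\lambda^*$ above achieves. The supplementary hypothesis \eqref{assump exponent} on $b'$ near $0$ and the precise value of $p_B^+$ play no role in this local estimate; they are needed for the subsequent mountain-pass geometry and the Ekeland-type compactness argument rather than for this coercivity-near-zero statement.
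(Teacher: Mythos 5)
Your proof is correct and follows essentially the same route as the paper: both estimate $\rho_{2,G}(u)\ge\rho^{p^+}$ and $\rho_B(u)\le(C\rho)^{p_B^-}$ for $\|u\|_{2,G}=\rho$ small via Lemma \ref{comp norm modular} and the embedding $W^{2,G}_0\hookrightarrow L^B$, then define $\lambda^*$ proportionally to $\rho^{p^+-p_B^-}$. Your remark that \eqref{assump exponent} and $p_B^+$ play no role here is accurate (the paper's closing sentence invoking \eqref{assump exponent} appears to be a slip), and your emphasis on calibrating $\lambda$ and $\rho$ together rather than shrinking $\rho$ at fixed $\lambda$ is precisely the right reading of the geometry.
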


\begin{proof}
By the compact embedding Theorem \ref{compact theorm}, there is $C>1$ such that
$$||u||_B\leq C||u||_{2, G}.$$Choose $0<\rho < 1$ so that
\begin{equation}\label{cond rho}
C\rho <1.
\end{equation}Let $\|u\|_{2, G}=\rho$, then
\begin{equation}\label{large calc}
\begin{split}
\Phi_\lambda(u)&=\int_\Omega G(\Delta u)\,dx -\lambda\int_\Omega B(u)\\ & \geq \min\left\lbrace \|u\|_{2, G}^{p^{+}},\|u\|_{2, G}^{p^{-}}\right\rbrace -\lambda \max\left\lbrace \|u\|_B^{p_B^{+}},\|u\|_B^{p_B^{-}}\right\rbrace \\& \geq \min\left\lbrace \|u\|_{2, G}^{p^{+}},\|u\|_{2, G}^{p^{-}}\right\rbrace -\lambda  \max\left\lbrace (C\|u\|_{2, G})^{p_B^{+}},(C\|u\|_{2, G})^{p_B^{-}}\right\rbrace \\ & = \|u\|_{2, G}^{p^{+}}-\lambda C \|u\|_{2, G}^{p_B^{-}} \qquad (\text{by }\eqref{cond rho})\\ & = \rho^{p^{+}}-\lambda C \rho^{p_B^{-}}=\rho^{p_B^{-}}(\rho^{p^+-p_B^{-}}-\lambda C).
\end{split}
\end{equation}Hence, choosing 
$$\lambda^*=\dfrac{\rho^{p^+-p_B^{-}}}{2C}$$we conclude the proof from \eqref{large calc} and the assumption \eqref{assump exponent}.
\end{proof}

\begin{lemma}\label{lemb}
Under the assumptions of Theorem \ref{eigenvalue}, there is $v\in W^{2, G}_0(\Omega)$, $v\geq 0$, $v\neq 0$ such that
$$\Phi_\lambda(tv)< 0 \qquad \text{for all $t>0$ small}.$$
\end{lemma}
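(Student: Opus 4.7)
The plan is to pick a single nonnegative bump $v\in C_c^\infty(\Omega)$, $v\not\equiv 0$, and to compare the two natural $t$-scalings of the two pieces of
$\Phi_\lambda(tv)=\int_\Omega G(t\Delta v)\,dx-\lambda\int_\Omega B(tv)\,dx$.
The idea is that $\int_\Omega G(t\Delta v)\,dx$ scales at worst like $t^{p^-}$ as $t\to 0^+$ (via the $\Delta_2$-structure of $G$), whereas under \eqref{assump exponent} the term $\int_\Omega B(tv)\,dx$ scales at least like $t^{p_B^-}$; since $p_B^-<p^-$ by \eqref{hyp b}, the $B$-term dominates near $0$ and forces $\Phi_\lambda(tv)<0$.

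For the first piece I would use the standard Orlicz scaling coming from \eqref{G1}: the computation $(G(s)/s^{p^-})'=s^{-p^--1}(sg(s)-p^-G(s))\ge 0$ shows that $s\mapsto G(s)/s^{p^-}$ is non-decreasing, hence $G(t\tau)\le t^{p^-}G(\tau)$ for every $\tau\ge 0$ and $t\in (0,1]$. Integrating over $\Omega$ and using that $G$ is even gives $\int_\Omega G(t\Delta v)\,dx\le t^{p^-}\rho_G(\Delta v)$. For the second piece I would use \eqref{assump exponent} to produce a pointwise lower bound $B(s)\ge c\,s^{p_B^-}$ on a right-neighborhood $(0,t_0')$ of $0$. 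The condition \eqref{assump exponent} is exactly $(b(s)/s^{p_B^--1})'\ge 0$ on $(0,t_0)$, i.e.\ $b(s)/s^{p_B^--1}$ is non-decreasing there; together with the global $\Delta_2$-inequality $sb(s)\ge p_B^- B(s)$ this pins down the near-origin growth of $B$ to be at least of order $s^{p_B^-}$, so that one obtains a positive constant $c$ with $B(s)\ge c\,s^{p_B^-}$ for $s\in(0,t_0')$. Then, whenever $t\|v\|_\infty<t_0'$,
$$\int_\Omega B(tv)\,dx\;\ge\; c\,t^{p_B^-}\int_\Omega v^{p_B^-}\,dx.$$

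Combining the two estimates yields
$$\Phi_\lambda(tv)\;\le\;t^{p_B^-}\!\left(t^{p^--p_B^-}\rho_G(\Delta v)-\lambda c\int_\Omega v^{p_B^-}\,dx\right),$$
and since $p^--p_B^->0$ the bracket is strictly negative for all sufficiently small $t>0$, finishing the proof. I expect the main technical obstacle to be precisely the near-origin lower bound $B(s)\gtrsim s^{p_B^-}$: this is where the one-sided condition \eqref{assump exponent} on $tb'(t)/b(t)$ does the real work, preventing $B$ from being of strictly higher order than $s^{p_B^-}$ near $0$ (in which case the $B$-term would no longer dominate the $G$-term). The rest of the argument is routine bookkeeping once this bound is in hand.
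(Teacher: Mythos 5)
Your overall strategy matches the paper's: bound $\int_\Omega G(t\Delta v)\,dx$ above by a power of $t$ coming from $p^-$, bound $\int_\Omega B(tv)\,dx$ below by a power coming from $p_B^-$, and let the gap $p^- > p_B^-$ from \eqref{hyp b} win for small $t$. Your first half is fine (and actually a touch cleaner than the paper's): the monotonicity of $s\mapsto G(s)/s^{p^-}$ gives $G(ts)\le t^{p^-}G(s)$ directly, whereas the paper routes the same inequality through the modular--norm comparison of Lemma~\ref{comp norm modular}.

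However, the second half has a genuine gap: the claimed near-origin bound $B(s)\ge c\,s^{p_B^-}$ does \emph{not} follow from \eqref{assump exponent}. Since $p_B^-=\inf_{t>0}\tfrac{tb(t)}{B(t)}$, the function $s\mapsto B(s)/s^{p_B^-}$ is nondecreasing, so its limit as $s\to 0^+$ exists but may equal $0$; in that case no constant $c>0$ works. A concrete counterexample near the origin is $B(t)=t^{q}/(-\ln t)$ for small $t$: here $tb(t)/B(t)=q+1/(-\ln t)\searrow q$, so $p_B^-=q$, and one checks $tb'(t)/b(t)\to q-1=p_B^--1$ from above, so \eqref{assump exponent} holds on a small interval $(0,t_0)$, yet $B(t)/t^{p_B^-}=1/(-\ln t)\to 0$. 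What \eqref{assump exponent} actually buys you is that $h(t)=tb(t)/B(t)$ does not drift up to $p_B^+$ near $0$, so for each $\varepsilon>0$ there is $t_0'>0$ with $h(t)<p_B^-+\varepsilon$ on $(0,t_0')$; integrating gives only $B(t)\ge c\,t^{p_B^-+\varepsilon}$ there. Choosing $\varepsilon\in(0,p^--p_B^-)$ (possible precisely because of \eqref{hyp b}) is exactly how the paper's \eqref{Bop} proceeds, and the paper even flags in a remark that the naive bound $B(t)\ge\min\{t^{p_B^-},t^{p_B^+}\}$ is not sharp enough. With your exponent replaced by $p_B^-+\varepsilon$ the bracket in your final display still goes to $-\infty$ in exponent-scaled form, so the conclusion is saved; but as written, the key lower bound is false.
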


\begin{proof}First, observe that assumption \eqref{assump exponent} implies that the function
$$h(t):=\dfrac{tb(t)}{B(t)}, \quad t>0,$$is nondecreasing. Indeed, 
$$h'(t)= \dfrac{(b(t)+tb'(t))B(t)-tb(t)b(t)}{B(t)^2}\geq  \dfrac{(b(t)+tb'(t))B(t)-p_B^-B(t)b(t)}{B(t)^2}=\dfrac{tb'(t)+(1-p_B^{-})b(t)}{B(t)}>0.$$Hence, for $0<\varepsilon< p^{-}-p_B^{-}$, there is $0<t_0<1$ such that
$$\dfrac{tb(t)}{B(t)}< p_B^{-}+\varepsilon_0<p^{-},$$for any $0 <t<t_0$. Integrating both side and assuming that $B(1)=1$ for simplicity, we get
\begin{equation}\label{Bop}
B(t)\geq t^{p_B^{-}+\varepsilon}, \quad t \in (0, t_0).
\end{equation} Let now $v\in C_0^{\infty}(\Omega)$, nonnegative and such that $v(x), |\Delta v(x)|\leq 1/t_0$ for all $x\in \Omega$. Then, for any $t\in (0, t_0)$, it follows
\begin{equation*}
\begin{split}
\Phi_\lambda(tv) &\leq t^{p^{-}}\max\left\lbrace \|v\|_{2, G}^{p^{+}},\|v\|_{2, G}^{p^{-}}\right\rbrace -\lambda t^{p^{-}_B+\varepsilon}\|v\|_{p_B^{-}+\varepsilon}^{p^{-}_B+\varepsilon}\\ & = c_1(v) t^{p^{-}} -\lambda t^{p^{-}_B+\varepsilon} c_2(v)\\ & = t^{p^{-}}(c_1(v)-\lambda c_2(v)t^{p^{-}_B+\varepsilon-p^{-}}).
\end{split}
\end{equation*}Hence, taking $t$ small enough (and depending on the norms of $v$), we conclude the proof of the lemma. 

\end{proof}
\begin{remark}
 We point out that assumption \eqref{hyp b} means that $G$ grows much slower than $B$ near $0$. Indeed, as it will be shown in the proof of Lemma \ref{lemb}, hypothesis \eqref{assump exponent} implies that the quotient
$$\dfrac{tb(t)}{B(t)}$$is nodecreasing. Hence,  for $\varepsilon \in (0, p^{-}-p_B^{-})$, there is $0<t_0<1$ (see again the proof of Lemma \ref{lemb}) such that
$$B(t)\geq t^{p_B^{-}+\varepsilon}, \quad \text{for all }t\in (0, t_0).$$Therefore, by the choice of $\varepsilon$,
$$\lim_{t\to 0^{+}}\dfrac{G(t)}{B(t)} \leq \lim_{t\to 0^{+}}\dfrac{t^{p^{-}}}{t^{p_B^{-}+\varepsilon}}=0.$$This proves the assertion. Also, observe that we are not assuming any behaviour between  $B$ and $G$ around $\infty.$
\end{remark}
\begin{remark}
Observe that the lower bound $B(t)\geq \min\left\lbrace t^{p_B^-},t^{p_B^{+}} \right\rbrace$ does not help  to prove Lemma \ref{lemb}. We need to use a sharper lower bound, see \eqref{Bop} .
\end{remark}
Next, we finish the proof of the main theorem

\begin{proof}[Proof of Theorem \ref{eigenvalue}]
First, by Lemma \ref{lemaa}, there is $\rho > 0$ such that
$$\inf_{\partial B_\rho(0)}\Phi_\lambda> 0.$$Moreover, for any $u\in B_\rho(0)$,
\begin{equation}
\Phi_\lambda(u)\geq \|u\|_{2, G}^{p^{+}}-C\lambda\|u\|_{2, G}^{p_B^{-}}\geq -C\lambda \rho^{p_B^{-}} > -\infty.
\end{equation}Thus, 
\begin{equation}\label{d}
-\infty < \inf_{\overline{B_\rho(0)}}\Phi_\lambda < 0.
\end{equation}
Let
$$0<\varepsilon <\inf_{\partial B_\rho(0)}\Phi_\lambda - \inf_{\overline{B_\rho(0)}}\Phi_\lambda,$$then by the Ekeland's Variational Principle, there is $u_\varepsilon\in \overline{B_\rho(0)}$ such that
\begin{equation}\label{ue}
\Phi_\lambda(u_\varepsilon) < \inf_{\overline{B_\rho(0)}}\Phi_\lambda +\varepsilon,
\end{equation}
and
\begin{equation}\label{u epsilon ineq}
\Phi_\lambda(u_\varepsilon) < \Phi_\lambda(u)+\varepsilon\|u_\varepsilon-u\|_{2, G},\quad u\neq u_\varepsilon.
\end{equation}Since \eqref{d} and \eqref{ue}
$$\Phi_\lambda(u_\varepsilon) < \inf_{\overline{B_\rho(0)}}\Phi_\lambda +\varepsilon < \inf_{\partial B_\rho(0)}\Phi_\lambda,$$we deduce that $u_\varepsilon$ belongs to $B_\rho(0)$.\\ Let us define now the functional $\Psi_\lambda:\overline{B_\rho(0)} \to \mathbb{R}$ given by 
$$\Psi_\lambda(u):=\Phi_\lambda(u)+\varepsilon\|u-u_\varepsilon\|_{2, G}.$$Then, by \eqref{u epsilon ineq}, $u_\varepsilon$ is a minimum point of $\Psi_\lambda$ and so
\begin{equation}\label{tridente}
\dfrac{\Psi_\lambda(u_\varepsilon+tv)-\Psi_\lambda(u_\varepsilon)}{t}\geq 0, \quad t>0, v\in W^{2, G}_0(\Omega).
\end{equation}Letting $t\to 0^{+}$ in \eqref{tridente}, we obtain
$$\left\langle \Phi'_\lambda(u_\varepsilon), v\right\rangle +\varepsilon\|v\|_{2, G} > 0.$$This implies that $\|\Phi'(u_\varepsilon)\|\leq \varepsilon$. In this way, taking $\varepsilon=\frac{1}{n}$, we build a sequence $u_n \in B_\rho(0)$ such that
\begin{equation}\label{PS seq}
\Phi_\lambda(u_n)\to \inf_{\overline{B_\rho(0)}}\Phi_\lambda, \quad \Phi'_\lambda(u_n)\to 0.
\end{equation} Since $u_n$ is bounded, there is $u\in W^{2, G}_0(\Omega)$ such that $u_n\rightharpoonup u$ in $W^{2, G}_0(\Omega)$. Hence, by Theorem \ref{compact theorm},
$$u_n\to u\text{ in }L^B(\Omega).$$Next, observe that
\begin{equation}\label{S plus}
\begin{split}
& \int_\Omega \left(\dfrac{g(|\Delta u|)}{|\Delta u|}\Delta u-\dfrac{g(|\Delta u_n|)}{|\Delta u_n|}\Delta u_n \right)(\Delta u-\Delta u_n)\,dx \\& \qquad = \int_\Omega \left(\dfrac{g(|\Delta u|)}{|\Delta u|}\Delta u-\dfrac{g(|\Delta u_n|)}{|\Delta u_n|}\Delta u_n \right)(\Delta u-\Delta u_n)\,dx \\ &\qquad  -\lambda\int_\Omega  \left(\dfrac{b(| u|)}{| u|} u-\dfrac{b(| u_n|)}{| u_n|} u_n \right)( u- u_n)\,dx+ \lambda\int_\Omega  \left(\dfrac{b(| u|)}{| u|} u-\dfrac{b(| u_n|)}{| u_n|} u_n \right)( u- u_n)\,dx\\ & \qquad =\left\langle \Phi'_\lambda(u)-\Phi'_\lambda(u_n), u-u_n \right\rangle+ \lambda\int_\Omega  \left(\dfrac{b(| u|)}{| u|} u-\dfrac{b(| u_n|)}{| u_n|} u_n \right)( u- u_n)\,dx.
\end{split}
\end{equation}By \eqref{PS seq} and the weak convergence of $u_n$ to $u$, we obtain:
$$\left\langle \Phi'_\lambda(u)-\Phi'_\lambda(u_n), u-u_n \right\rangle \to 0,$$as $n\to \infty$. Moreover, 
$$\bigg|\int_\Omega  \left(\dfrac{b(| u|)}{| u|} u-\dfrac{b(| u_n|)}{| u_n|} u_n \right)( u- u_n)\,dx \bigg|\leq C\bigg\|\dfrac{b(| u|)}{| u|} u-\dfrac{b(| u_n|)}{| u_n|} u_n \bigg\|_{\tilde{B}}\|u_n-u\|_{B}\to 0$$as $n\to \infty$, since $$\bigg\|\dfrac{b(| u|)}{| u|} u-\dfrac{b(| u_n|)}{| u_n|} u_n \bigg\|_{\tilde{B}}\leq C$$for all $n$ by the weak convergence in $W^{2, G}_0(\Omega)$ and $\|u_n-u\|_{B}\to 0$ by the strong convergence in $L^B(\Omega).$ Therefore, by \eqref{S plus}, we obtain that
$$\limsup_{n\to \infty} \int_\Omega \left(\dfrac{g(|\Delta u|)}{|\Delta u|}\Delta u-\dfrac{g(|\Delta u_n|)}{|\Delta u_n|}\Delta u_n \right)(\Delta u-\Delta u_n)\,dx \leq 0,$$hence by Proposition \ref{properties biharmonic}, we conclude that $u_n \to u$ strongly in $W^{2, G}_0(\Omega)$. Therefore, \eqref{PS seq} implies that 
$$\left\langle \Phi'_\lambda(u), v\right\rangle =0,$$for any $v\in W^{2, G}_0(\Omega)$. So, $\lambda\in (0, \lambda^{*})$ is an eigenvalue of \eqref{problem eigenvalue}.
\end{proof}

\begin{remark}Observe that the critical point $u$ is not zero, since by \eqref{PS seq},
$$\Phi_\lambda(u) = d = \inf_{\overline{B_\rho(0)}}\Phi_\lambda <0.$$
\end{remark}

On the other hand, we get for any $\lambda\in (0, \lambda^{*})$, there exists $u_\lambda$ that
$$
\int_\Omega \dfrac{g(|\Delta u_\lambda|)}{|\Delta u_\lambda|}\Delta u_\lambda \Delta v\,dx=\lambda\int_\Omega \dfrac{b(|u_\lambda|)}{|u_\lambda|} u_\lambda v\,dx.
$$
Choosing $v=u_\lambda$, we have
$$
\frac{\int_\Omega g(|\Delta u_\lambda|)|\Delta u_\lambda|\,dx}{\int_\Omega b(|u_\lambda|)|u_\lambda|\,dx}=\lambda.
$$
Taking infimum
$$\inf_{u\in W_0^{2,G}(\Omega), u\neq 0}\frac{\int_\Omega g(|\Delta u|)|\Delta u|\,dx}{\int_\Omega b(|u|)|u|\,dx}\leq \lambda.$$

Hence, as  $\lambda\to 0^+$, we obtain
$$\inf_{u\in W_0^{2,G}(\Omega), u\neq 0}\dfrac{\int_\Omega g(|\Delta u|)|\Delta u|\,dx}{\int_\Omega b(|u|)|u|\,dx}=0,$$ and so the following anti-Sobolev type inequality holds:

\begin{corollary}
For any constant $C> 0$, there is $u\in W_0^{2,G}(\Omega)$ such that
$$\int_\Omega g(|\Delta u|)|\Delta u|\,dx\leq C\int_\Omega b(|u|)|u|\,dx.$$
\end{corollary}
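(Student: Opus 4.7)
The plan is to leverage directly the discussion immediately preceding the corollary, which already did most of the work: namely, that Theorem \ref{eigenvalue} produces, for every $\lambda \in (0, \lambda^{*})$, a nonzero eigenfunction $u_\lambda \in W_0^{2,G}(\Omega)$ satisfying
$$\int_\Omega \dfrac{g(|\Delta u_\lambda|)}{|\Delta u_\lambda|}\Delta u_\lambda\, \Delta v\,dx = \lambda \int_\Omega \dfrac{b(|u_\lambda|)}{|u_\lambda|} u_\lambda v\,dx$$
for all test $v$, and testing with $v=u_\lambda$ gives
$$\int_\Omega g(|\Delta u_\lambda|)|\Delta u_\lambda|\,dx = \lambda \int_\Omega b(|u_\lambda|)|u_\lambda|\,dx.$$

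Given an arbitrary $C>0$, I would simply choose $\lambda_0 := \min\{C,\lambda^{*}\}/2 > 0$, which lies in $(0, \lambda^{*})$ and satisfies $\lambda_0 \leq C$. Applying the observation above at $\lambda = \lambda_0$, the eigenfunction $u := u_{\lambda_0} \in W_0^{2,G}(\Omega)$ is nonzero, and one has
$$\int_\Omega g(|\Delta u|)|\Delta u|\,dx \;=\; \lambda_0 \int_\Omega b(|u|)|u|\,dx \;\leq\; C \int_\Omega b(|u|)|u|\,dx,$$
which is exactly the claimed inequality.

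There is essentially no obstacle here: the corollary is a direct rephrasing of the fact that the infimum of the Rayleigh quotient
$$\inf_{u \neq 0}\dfrac{\int_\Omega g(|\Delta u|)|\Delta u|\,dx}{\int_\Omega b(|u|)|u|\,dx}$$
equals $0$, a consequence of the existence of eigenvalues arbitrarily close to $0$. The only mild point to mention is that $u_{\lambda_0}\neq 0$ (hence $\int_\Omega b(|u|)|u|\,dx > 0$, so the inequality is nontrivial), which is part of the conclusion of Theorem \ref{eigenvalue} as highlighted in the remark right after its proof.
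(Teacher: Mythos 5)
Your proof is correct and follows exactly the paper's argument: the discussion preceding the corollary shows that eigenvalues of problem \eqref{problem eigenvalue} accumulate at $0$, and testing the weak formulation with $v=u_\lambda$ gives a Rayleigh quotient equal to $\lambda$; choosing $\lambda < C$ in $(0,\lambda^*)$ then yields the claimed inequality with a nonzero $u$. No discrepancies to note.
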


Observe that in the previous case, we do not use any relation between the exponents $p^{-}$ and $p_B^+$. Indeed, the proof also works in the case
$$p_B^{-}<p_B^{+}<p^{-}.$$However, we will see that in this case, the spectrum is larger.

\begin{theorem}\label{eigenvalueinfty}
Let $G$ and $B$ be two $N$-functions satisfying \eqref{G1}. Moreover assume that $B \ll G^*$ and  that
\begin{equation}\label{hyp b 3}
1<p_B^{-}<p_B^{+}<p^-,
\end{equation}Then, any $\lambda > 0$ is an eigenvalue of the problem \eqref{problem eigenvalue}.
\end{theorem}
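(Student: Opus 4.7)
The plan is to show that, under the hypothesis $p_B^+<p^-$, the energy functional
$$\Phi_\lambda(u)=\int_\Omega G(\Delta u)\,dx-\lambda\int_\Omega B(u)\,dx$$
is coercive and weakly lower semicontinuous on $W^{2,G}_0(\Omega)$ for every $\lambda>0$, and that its global infimum is strictly negative. A nontrivial minimiser then provides the desired eigenfunction.

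\textbf{Step 1 (coercivity).} Let $\|u\|_{2,G}\to\infty$. By Lemma \ref{comp norm modular} one has $\rho_{2,G}(u)\geq \|u\|_{2,G}^{p^-}$ for $\|u\|_{2,G}\geq 1$. Using the compact embedding $W^{2,G}_0(\Omega)\hookrightarrow L^B(\Omega)$ granted by Theorem \ref{compact theorm} (since $B\ll G^*$), there is $C>0$ with $\|u\|_B\leq C\|u\|_{2,G}$, and again by Lemma \ref{comp norm modular} applied to $B$,
$$\int_\Omega B(u)\,dx\leq \max\{\|u\|_B^{p_B^-},\|u\|_B^{p_B^+}\}\leq C'\|u\|_{2,G}^{p_B^+}$$
once $\|u\|_{2,G}$ is large. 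Hence
$$\Phi_\lambda(u)\geq \|u\|_{2,G}^{p^-}-\lambda C'\|u\|_{2,G}^{p_B^+},$$
which tends to $+\infty$ because $p^->p_B^+$.

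\textbf{Step 2 (weak lower semicontinuity).} The map $u\mapsto \int_\Omega G(\Delta u)\,dx$ is convex and strongly continuous on $W^{2,G}_0(\Omega)$, hence weakly lower semicontinuous. If $u_n\rightharpoonup u$ in $W^{2,G}_0(\Omega)$, the compact embedding into $L^B(\Omega)$ and the $\Delta_2$-property of $B$ give $\int_\Omega B(u_n)\,dx\to\int_\Omega B(u)\,dx$, as in \eqref{seq lower}. Thus $\Phi_\lambda$ is weakly lower semicontinuous, and combined with coercivity (and the reflexivity of $W^{2,G}_0(\Omega)$ coming from the $\Delta_2$-assumption on $G$ and $\tilde G$) it attains its infimum at some $u_\lambda\in W^{2,G}_0(\Omega)$.

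\textbf{Step 3 (nontriviality).} I would show $\Phi_\lambda(u_\lambda)<0$, so that $u_\lambda\neq 0$. Pick $v\in C_0^\infty(\Omega)$ with $v\not\equiv 0$, and consider $\Phi_\lambda(tv)$ for small $t>0$. Once $t$ is small enough that $t|v|,t|\Delta v|\leq 1$ everywhere, the growth estimates derived from \eqref{G1} yield
$$G(t|\Delta v|)\leq t^{p^-}G(|\Delta v|)\quad\text{(up to a normalisation)},\qquad B(t|v|)\geq t^{p_B^+}B(|v|),$$
so that
$$\Phi_\lambda(tv)\leq t^{p^-}c_1(v)-\lambda t^{p_B^+}c_2(v)=t^{p_B^+}\bigl(c_1(v)t^{p^--p_B^+}-\lambda c_2(v)\bigr).$$
Since $p^->p_B^+$, this is negative for $t$ sufficiently small, which forces $\inf_{W^{2,G}_0}\Phi_\lambda<0$ and therefore $u_\lambda\neq 0$.

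\textbf{Step 4 (Euler--Lagrange equation).} Since $u_\lambda$ is an interior global minimiser and $\Phi_\lambda\in C^1$ (by Proposition \ref{properties biharmonic} applied to $L$, combined with the $\Delta_2$-smoothness of $u\mapsto \int_\Omega B(u)$), one has $\Phi_\lambda'(u_\lambda)=0$, which is exactly the weak formulation of \eqref{problem eigenvalue}. Hence $\lambda$ is an eigenvalue with eigenfunction $u_\lambda$; as $\lambda>0$ was arbitrary, this completes the proof.

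The main obstacle is the verification of Step 3: one must exploit the precise behaviour of $G$ and $B$ near zero rather than simply the bounds $\xi^\pm$ of Lemma \ref{comp norm modular}, in order to ensure that the $B$-term, whose exponent $p_B^+$ is smaller than $p^-$, indeed dominates the $G$-term for small $t$. Coercivity (Step 1) and the construction of a negative test direction (Step 3) hinge on the same inequality $p_B^+<p^-$, used at opposite ends of the norm scale.
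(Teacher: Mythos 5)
Your proof is correct and follows the paper's main strategy: establish coercivity of $\Phi_\lambda$ from $p_B^+<p^-$, obtain a global minimiser by weak lower semicontinuity, show the infimum is strictly negative so that the minimiser is nontrivial, and read off the Euler--Lagrange equation. Steps 1, 2 and 4 coincide with the paper's. Step 3 is carried out differently, and your version is arguably cleaner. You fix one $v\in C_0^\infty(\Omega)$ and scale it by $t\to 0^+$, invoking the pointwise modular inequalities $G(ts)\le t^{p^-}G(s)$ and $B(ts)\ge t^{p_B^+}B(s)$, which follow directly from \eqref{G1} by integrating the logarithmic derivative; this yields $\Phi_\lambda(tv)\le t^{p_B^+}\bigl(c_1(v)\,t^{p^--p_B^+}-\lambda c_2(v)\bigr)<0$ for small $t$, for \emph{every} $\lambda>0$, with all constants attached to the single fixed $v$. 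The paper instead bounds $\Phi_\lambda(v)$ above in terms of $\|v\|_{2,G}$ and $\|v\|_B$, introduces $F(t)=\max\{t^{p^+},t^{p^-}\}/\bigl(C\min\{t^{p_B^+},t^{p_B^-}\}\bigr)$, and argues that because $F$ has range $(0,\infty)$ under \eqref{hyp b 3}, one can always find a $v$ making $\Phi_\lambda(v)<0$; this phrasing silently assumes a lower bound $\rho_B(v)\ge C\min\{\|v\|_{2,G}^{p_B^\pm}\}$ with a $v$-independent $C$, which is really only valid along a fixed scaling ray $\{tv_0\}$ where $\|v\|_B/\|v\|_{2,G}$ is constant. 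Your formulation makes this scaling explicit and avoids the ambiguity. One small remark: you do not actually need the restriction ``$t$ small enough that $t|v|,\,t|\Delta v|\le 1$ everywhere''; the inequalities $G(ts)\le t^{p^-}G(s)$ and $B(ts)\ge t^{p_B^+}B(s)$ hold for all $s>0$ whenever $0<t\le 1$, so you may drop that caveat and the parenthetical ``up to a normalisation''.
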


\begin{proof}
In this case, we will show that  for any $\lambda> 0$, the functional $\Phi_\lambda$ has a nontrivial minimum point. 

We start by showing that the functional   $\Phi_\lambda$ is coercive:
$$\Phi_\lambda(u)\to \infty, \text{ as }\|u\|_{2, G}\to \infty.$$Observe, by a similar reasoning as in \eqref{large calc}, that for $\|u\|_{2, G}> 1$ and thanks to \eqref{hyp b 3},
$$\Phi_\lambda(u) \geq \|u\|_{2, G}^{p^-}-C\lambda \|u\|_{2, G}^{p_B^+}\geq \|u\|_{2, G}^{p^-}(1-C\lambda \|u\|_{2, G}^{p_B^{+}-p^-}) \to \infty$$as $\|u\|_{2, G}\to \infty$. This shows that $\Phi_\lambda$ is coercive. Moreover, $\Phi_\lambda$ is sequentially weakly lower-semicontinuous, since the modular
$$\rho_G(\cdot)$$is seq. lower semi-continuous and 
$$\int_\Omega B(u_n)\,dx\to \int_\Omega B(u)\,dx$$for any sequence $u_n \rightharpoonup u$ in $W_0^{2,G}(\Omega)$ (see for instance the calculation \eqref{seq lower}). Then, there is $u\in W_0^{2,G}(\Omega)$ such that
$$\Phi_\lambda(u)=\inf_{W_0^{2,G}(\Omega)}\Phi_\lambda.$$Hence $u$ is a weak solution of \eqref{problem eigenvalue}. In order to show that $u \neq 0$, take any $v\in C_0^{\infty}(\Omega)$ so that $0<\|v\|_{2, G}$. Then,

$$\Phi_\lambda(v)\leq \max\left\lbrace \|v\|_{2, G}^{p^+}, \|v\|_{2, G}^{p^-}\right\rbrace-\lambda C \min\left\lbrace \|v\|_{2, G}^{p_B^+}, \|v\|_{2, G}^{p_B^-}\right\rbrace.$$Hence, choose
\begin{equation}\label{cond lambda1}
\lambda^{**}:=\dfrac{\max\left\lbrace \|v\|_{2, G}^{p^+}, \|v\|_{2, G}^{p^-}\right\rbrace}{C \min\left\lbrace \|v\|_{2, G}^{p_B^+}, \|v\|_{2, G}^{p_B^-}\right\rbrace}>0.
\end{equation}In this way, for any $\lambda > \lambda^{**}$, there is $v$ so that $\Phi_\lambda(v)<0$. Next, we will analize the possible values of $\lambda^{**}$. Introducing the function
$$F(t):= \dfrac{\max\left\lbrace t^{p^+}, t^{p^-}\right\rbrace}{C \min\left\lbrace t^{p_B^+}, t^{p_B^-}\right\rbrace},$$there holds
$$F(t)=\begin{cases} \dfrac{1}{C}t^{p^{-}-p_B^{+}}, \quad 0<t<1\\\dfrac{1}{C}t^{p^{+}-p_B^{-}}, \quad t>1.
\end{cases}$$By assumption \eqref{cond lambda1}, $p^{-}-p_B^{+}, p^{+}-p_B^{-}>0$, so the range of $F$ is $(0, \infty)$. Hence, by \eqref{cond lambda1},  any $\lambda>0$ may be chosen to get $\Phi_\lambda(v)<0$ for some $v$.
\normalcolor
We conclude that $u\neq 0$. This ends the proof of the Theorem. 
\end{proof}

\begin{remark}
Observe that under the assumption \eqref{hyp b 3}, we have
$$\dfrac{G(t)}{B(t)}\geq \dfrac{t^{p^{-}}}{t^{p_B^{+}}}\to \infty \quad \text{as }t\to \infty$$and
$$\dfrac{G(t)}{B(t)}\leq \dfrac{t^{p^{+}}}{t^{p_B^{-}}}\to 0 \quad \text{as }t\to 0.$$Thus, in Theorem \ref{eigenvalueinfty}, we have $B\ll G$ near $\infty$ and $G\ll B$ near $0$.
\end{remark}
\normalcolor


In the next result, we state the case where $B$ grows much slower than $G$ near $0$. 
\begin{theorem}[Spectrum concentrated at $\infty$]\label{eigenvalueinfty2}
Let $G$ and $B$ be two $N$-functions satisfying \eqref{G1}. Moreover assume that $B \ll G^*$ and that
\begin{equation}\label{hyp b 22}
1<p^{-}<p_B^{-}<p^{+}
\end{equation} and that there is $t_0>0$
\begin{equation}\label{assumt g prima}
p^{-}-1\leq \dfrac{tg'(t)}{g(t)}, \quad \text{for all }t\in (0, t_0).
\end{equation}Then, there is $\lambda^{**}>0$ such that any $\lambda\in (\lambda^{**},\infty)$ is an eigenvalue of the problem \eqref{problem eigenvalue}.
\end{theorem}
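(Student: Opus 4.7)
The plan is to mirror the proof of Theorem \ref{eigenvalue}, with the roles of the growth conditions suitably swapped: the sharper growth condition now applies to $G$ rather than $B$, and we will seek eigenvalues for $\lambda$ large rather than small. Throughout, let $\Phi_\lambda(u):=\int_\Omega G(\Delta u)\,dx-\lambda\int_\Omega B(u)\,dx$, whose critical points coincide with weak solutions of \eqref{problem eigenvalue}.

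First, I would use \eqref{assumt g prima} exactly as in the proof of Lemma \ref{lemb} --- but with the roles of $G$ and $B$ swapped --- to show that the function $t\mapsto tg(t)/G(t)$ is nondecreasing on $(0,t_0)$ and that its infimum $p^-$ is attained as a limit at $0$. Consequently, for any $\varepsilon\in(0,p_B^- - p^-)$ there is $t_\varepsilon>0$ such that $G(t)\geq Ct^{p^- + \varepsilon}$ for $t\in(0,t_\varepsilon)$; this is the $G$-analog of the bound \eqref{Bop}.

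Next, I would establish two preliminary lemmas analogous to Lemmas \ref{lemaa} and \ref{lemb}. The counterpart of Lemma \ref{lemaa} asserts that there exist $\rho>0$ and $\alpha>0$ such that $\Phi_\lambda\geq\alpha$ on $\partial B_\rho(0)$: the idea is to upgrade the sharper pointwise bound on $G$ to a modular inequality of the form $\int G(\Delta u)\geq c\|u\|_{2,G}^{p^- + \varepsilon}$ for $u$ of small norm, which, combined with $\int B(u)\leq C\|u\|_{2,G}^{p_B^-}$ coming from the compact embedding $W^{2,G}_0\hookrightarrow L^B$ guaranteed by $B\ll G^*$ and Theorem \ref{compact theorm}, yields
\[
\Phi_\lambda(u)\geq c\rho^{p^- + \varepsilon}-\lambda C\rho^{p_B^-}=\rho^{p^- + \varepsilon}\bigl(c-\lambda C\rho^{p_B^- - p^- - \varepsilon}\bigr),
\]
strictly positive for $\rho$ small enough (possibly depending on $\lambda$), since $p_B^- - p^- - \varepsilon>0$. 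The counterpart of Lemma \ref{lemb} is then immediate: pick $v_0\in C_c^\infty(\Omega)\setminus\{0\}$ with $\|v_0\|_{2,G}<\rho$ and set $\lambda^{**}:=\int_\Omega G(\Delta v_0)\,dx\,/\int_\Omega B(v_0)\,dx$; then $\Phi_\lambda(v_0)<0$ whenever $\lambda>\lambda^{**}$.

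With these two ingredients the proof closes exactly as in Theorem \ref{eigenvalue}: the infimum of $\Phi_\lambda$ on $\overline{B_\rho(0)}$ is negative but not attained on $\partial B_\rho$; Ekeland's variational principle produces a bounded sequence $\{u_n\}\subset B_\rho(0)$ with $\Phi_\lambda(u_n)\to\inf\Phi_\lambda$ and $\Phi_\lambda'(u_n)\to 0$; this sequence converges weakly in $W^{2,G}_0(\Omega)$ and strongly in $L^B(\Omega)$ by the compact embedding, and the $(S_+)$ property from Proposition \ref{properties biharmonic} promotes this to strong convergence in $W^{2,G}_0(\Omega)$. The limit $u$ is then a critical point with $\Phi_\lambda(u)<0=\Phi_\lambda(0)$, hence nontrivial, yielding an eigenfunction. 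I expect the main obstacle to be lifting the pointwise inequality $G(t)\geq Ct^{p^- + \varepsilon}$ to the modular inequality $\int G(\Delta u)\geq c\|u\|_{2,G}^{p^- + \varepsilon}$ used in the first lemma: since $|\Delta u|$ can concentrate even when $\|u\|_{2,G}$ is small, the pointwise estimate does not integrate uniformly, and one likely needs a careful decomposition of $\Delta u$ into a bounded part and a concentrated part --- the latter controlled by a Markov-type inequality applied to $\rho_G(\Delta u)$ --- together with the sharper bound on the bounded piece to close the argument.
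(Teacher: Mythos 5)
Your proposal tracks the paper's strategy point for point: a sharper pointwise estimate $G(t)\geq t^{p^-+\varepsilon}$ for small $t$, lifted to a modular lower bound, combined with the embedding estimate on $\rho_B$, to obtain positivity of $\Phi_\lambda$ on $\partial B_\rho(0)$ for every $\lambda>0$ at the cost of shrinking $\rho$; an interior point with $\Phi_\lambda(v_0)<0$ to define $\lambda^{**}$; and Ekeland plus the $(S_+)$ property to produce the eigenfunction. The paper's own proof takes exactly this route, and in particular it too writes $\Phi_\lambda(u)\geq\|u\|_{2,G}^{p^-+\varepsilon}-C\lambda\|u\|_{2,G}^{p_B^-}$ for $\|u\|_{2,G}=\rho$ small.

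The obstacle you flag at the end is real, and I do not think the ``decomposition plus Markov'' idea resolves it. The inequality $\rho_G(\Delta u)\geq\|u\|_{2,G}^{p^-+\varepsilon}$ for $\|u\|_{2,G}$ small does \emph{not} follow from the pointwise bound $G(t)\geq t^{p^-+\varepsilon}$ on $(0,t_0)$, and in general it is false: the comparison of Lemma~\ref{comp norm modular}, which for $\|u\|_{2,G}<1$ gives only $\rho_G(\Delta u)\geq\|u\|_{2,G}^{p^+}$, is sharp. Concretely, if $\Delta u$ takes a single large value $M\gg 1$ on a set of small measure $\delta$, then both $\Delta u$ and $\Delta u/\|u\|_{2,G}$ live in the regime where $G$ scales like $t^{p^+}$, so $\rho_G(\Delta u)\sim\|u\|_{2,G}^{p^+}\ll\|u\|_{2,G}^{p^-+\varepsilon}$, and the local exponent $p^-+\varepsilon$ is never activated. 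Markov controls only the measure of $\{|\Delta u|\geq t_0\}$ and gives nothing beyond the global $p^+$-growth on that set, so the split does not recover the exponent $p^-+\varepsilon$. Consequently the first lemma in your plan (positivity of $\Phi_\lambda$ on $\partial B_\rho$ for \emph{arbitrary} $\lambda>0$ after shrinking $\rho$) is not established; since $p^+>p_B^-$, the honest estimate $\rho^{p^+}-C\lambda\rho^{p_B^-}$ is negative for small $\rho$ once $\lambda$ is large, which is precisely the regime the theorem concerns. Closing this gap requires a genuinely different ingredient --- for instance, a modular Poincar\'e-type comparison between $\rho_B(u)$ and $\rho_G(\Delta u)$ that exploits $B\ll G^*$ to rule out the concentrating profiles, rather than passing through the Luxemburg norms. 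You should also note that the paper itself asserts the offending modular inequality without justification, so your hesitation is pointing at a genuine issue in the argument, not at a missing routine reference.
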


\begin{proof}
We combine the strategies of the proofs of Theorem \ref{eigenvalue} and Theorem \ref{eigenvalueinfty}. Indeed, reasoning as in the proof of Lemma \ref{lam}, there is $t_0'<t_0$ such that
$$G(t)\geq t^{p^{-}+\varepsilon}, \quad \text{for all }t\in (0, t_0'),$$and with $\varepsilon\in (0, p_B^{-}-p^{-})$. Hence, for $\rho \in (0, t_0')$ and $\|u\|_{2, G}=\rho$, 
$$\Phi_\lambda(u)\geq \|u\|_{2, G}^{p^{-}+\varepsilon}-C\lambda  \|u\|_{2, G}^{p_B^{-}}=\rho^{p_B^{-}}(\rho^{p^{-}+\varepsilon-p_B^{-}}-C\lambda).$$Thus, for any $\lambda>0$, and taking $\rho$ small enough, we have that for any $u\in \partial B_\rho(0)$. there holds $\Phi_\lambda(u)>0$. 

Also, taking  any $v\in C_0^{\infty}(\Omega)$ so that $0<\|v\|_{2, G}<1$, we get
$$\Phi_\lambda(v)\leq \max\left\lbrace \|v\|_{2, G}^{p^+}, \|v\|_{2, G}^{p^-}\right\rbrace-\lambda C \min\left\lbrace \|v\|_{2, G}^{p_B^+}, \|v\|_{2, G}^{p_B^-}\right\rbrace.$$As in the proof of Theorem \ref{eigenvalueinfty}, let
$$F(t):= \dfrac{\max\left\lbrace t^{p^+}, t^{p^-}\right\rbrace}{C \min\left\lbrace t^{p_B^+}, t^{p_B^-}\right\rbrace},$$then there holds
$$F(t)=\begin{cases} \dfrac{1}{C}t^{p^{-}-p_B^{+}}, \quad 0<t<1\\\dfrac{1}{C}t^{p^{+}-p_B^{-}}, \quad t>1.
\end{cases}$$Observe that by \eqref{hyp b 22}, $p^{-}-p_B^{+}<0$ and $p^{+}-p_B^{-}>0$, the range of $F$ is $(1/C, \infty)$. Hence, choosing 
\begin{equation}\label{cond lambda}
\lambda^{**}:=\dfrac{1}{C}>0,
\end{equation}we get that for any $\lambda\in (\lambda^{**},\infty)$, there is $v$ so that $\Phi_\lambda(v)<0$. The rest of the proof follows as for Theorem \ref{eigenvalue} applying the Ekeland's Variational Principle.  
\end{proof}

\begin{remark}
Under the assumption \eqref{hyp b 22}, we have

$$\dfrac{B(t)}{G(t)}\leq \dfrac{t^{p_B^{-}}}{t^{p^{-}+\varepsilon}}\to 0 \quad \text{as }t\to 0.$$Thus, in Theorem \ref{eigenvalueinfty2}, we have $B\ll G$ near $0$.
\end{remark}
\normalcolor


\begin{theorem}\label{eigenvalue3}
Let $G$ and $B$ be two $N$-functions satisfying \eqref{G1}. Moreover assume that $B \ll G^*$ and that
\begin{equation}\label{hyp b 2}
1<p^{-}<p^+<p_B^{-}<p_B^{+}.
\end{equation} 
Then, any $\lambda\in (0, \infty)$ is an eigenvalue of the problem \eqref{problem eigenvalue}.
\end{theorem}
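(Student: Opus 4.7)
The plan is to apply the Mountain Pass Theorem to
\[
\Phi_\lambda(u) = \int_\Omega G(\Delta u)\,dx - \lambda \int_\Omega B(u)\,dx, \qquad u \in W_0^{2, G}(\Omega),
\]
for each prescribed $\lambda > 0$. Unlike in Theorem \ref{eigenvalueinfty}, the condition $p^+ < p_B^-$ forces $\Phi_\lambda$ to be unbounded below (take $u = tv$ with $v \neq 0$ and let $t \to \infty$), so direct minimization is ruled out; however, this same inequality now yields a genuine Ambrosetti--Rabinowitz condition on $B$, which is precisely what failed in the setting of Theorem \ref{eigenvalue}. Hence the mountain pass machinery becomes available.

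For the mountain pass geometry I would use Lemma \ref{comp norm modular} together with the compact embedding $W_0^{2, G}(\Omega) \hookrightarrow L^B(\Omega)$ (available since $B \ll G^*$, by Theorem \ref{compact theorm}) to show that for $\|u\|_{2, G} = \rho < 1$ small,
\[
\Phi_\lambda(u) \geq \rho^{p^+} - \lambda C \rho^{p_B^-} = \rho^{p^+}\bigl(1 - \lambda C \rho^{p_B^- - p^+}\bigr) \geq \alpha > 0,
\]
exploiting $p_B^- > p^+$. On the other hand, for any $v \in C_0^\infty(\Omega) \setminus \{0\}$, the pointwise bounds $G(t\Delta v) \leq t^{p^+} G(\Delta v)$ and $B(tv) \geq t^{p_B^-} B(v)$ valid for $t \geq 1$ give $\Phi_\lambda(tv) \leq t^{p^+}\rho_G(\Delta v) - \lambda t^{p_B^-}\rho_B(v) \to -\infty$ as $t \to \infty$, producing an $e$ with $\|e\|_{2, G} > \rho$ and $\Phi_\lambda(e) < 0$.

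The main technical step is the Palais--Smale verification. The Ambrosetti--Rabinowitz relation $p_B^- B(t) \leq tb(t)$ follows directly from \eqref{G1} applied to $B$. For a PS sequence $(u_n)$, combining this with $\int_\Omega g(|\Delta u_n|)|\Delta u_n|\,dx \leq p^+ \rho_G(\Delta u_n)$ gives
\[
p_B^- \Phi_\lambda(u_n) - \langle \Phi'_\lambda(u_n), u_n\rangle \geq (p_B^- - p^+)\,\rho_G(\Delta u_n),
\]
which together with Lemma \ref{comp norm modular} and $p^- > 1$ forces boundedness of $(u_n)$ in $W_0^{2, G}(\Omega)$. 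Passing to a subsequence with $u_n \rightharpoonup u$ weakly in $W_0^{2, G}(\Omega)$ and strongly in $L^B(\Omega)$, I would decompose $\langle L'(u) - L'(u_n), u - u_n\rangle$ exactly as in \eqref{S plus}, check that the right-hand side tends to zero (the first summand by weak convergence and $\Phi'_\lambda(u_n)\to 0$, the second by the strong $L^B$ convergence and $L^{\tilde B}$-boundedness of $b(|u_n|)u_n/|u_n|$), and invoke the $(S_+)$ property of $L'$ from Proposition \ref{properties biharmonic} to conclude $u_n \to u$ strongly in $W_0^{2, G}(\Omega)$.

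The Mountain Pass Theorem then provides a critical point $u_\lambda$ of $\Phi_\lambda$ with $\Phi_\lambda(u_\lambda) \geq \alpha > 0$, hence $u_\lambda \neq 0$, which is the desired weak solution of \eqref{problem eigenvalue} at the prescribed $\lambda$. Since $\lambda > 0$ was arbitrary, $(0, \infty)$ is entirely contained in the spectrum. The delicate point is the Palais--Smale step, which relies simultaneously on the $\Delta_2$ exponents of both $G$ and $B$, the compact Sobolev embedding into $L^B(\Omega)$, and the $(S_+)$ property of $L'$ proved in Section \ref{prop}.
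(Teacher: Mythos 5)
The geometric part of your argument (positivity on $\partial B_\rho(0)$, blow-down along rays) matches the paper's proof exactly, including the same use of Lemma \ref{comp norm modular} and the inequality $p^+ < p_B^-$. The real difference is in how the Palais--Smale step is handled, and here your version is actually the more careful one. The paper proves the mountain pass geometry and then asserts ``we conclude following the lines of the proof of Theorem \ref{eigenvalue}''; but that argument used Ekeland's variational principle confined to the ball $\overline{B_\rho(0)}$, where the infimum was strictly negative and the almost-minimizers were automatically bounded. In the present setting $\inf_{\overline{B_\rho(0)}} \Phi_\lambda = 0$, attained only at the origin, so the same confinement yields nothing, and any Palais--Smale sequence at the mountain pass level must be shown bounded by hand. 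You correctly notice that what was explicitly unavailable in Theorem \ref{eigenvalue} (an Ambrosetti--Rabinowitz-type relation) is now available: the lower bound in \eqref{G1} applied to $B$ gives $p_B^- B(t) \leq tb(t)$ with $p_B^- > p^+$, and the resulting inequality
\[
p_B^- \Phi_\lambda(u_n) - \langle \Phi'_\lambda(u_n), u_n\rangle \geq (p_B^- - p^+)\,\rho_G(\Delta u_n)
\]
together with Lemma \ref{comp norm modular} and $p^- > 1$ forces boundedness of $(u_n)$. From there, the decomposition is indeed the same one the paper uses in \eqref{S plus}, the compact embedding and the $(S_+)$ property of $L'$ give strong convergence, and the Mountain Pass Theorem yields a critical point at level $\geq \alpha > 0$, hence nontrivial. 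In short: same geometry, but you supply the boundedness-of-Palais--Smale-sequences argument that the paper glosses over, and you route the conclusion through the classical Mountain Pass Theorem rather than an Ekeland argument; both are legitimate, and yours is more explicit about exactly where $p^+ < p_B^-$ enters.
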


\begin{proof}
Again, we will check that Lemma \ref{lemaa} and Lemma \ref{lemb} hold. Let $\lambda\in (0, \infty)$, take $\rho \in (0, 1)$, and let $\|u\|_{2, G}=\rho$. Following the calculations from \eqref{large calc}, we get
\begin{equation*}
\Phi_\lambda(u)\geq \rho^{p^+}-\lambda C \rho^{p_B^-}= \rho^{p_B^-}(\rho^{p^+-p_B^-}-\lambda C). 
\end{equation*} So, by \eqref{hyp b 2}, for $\rho$ small enough, there is $\alpha>0$ such that $\Phi_\lambda(u)\geq \alpha$ for any $\|u\|_{2, G}=\rho$.

Next, for $t>1$, take $v\in C_0^{\infty}(\Omega)$ such that $1<\|v\|_{2, G}$. Then,
\begin{equation}
\Phi_\lambda(tv)\leq t^{p^+}\|v\|_{2, G}^{p^+}-C\lambda t^{p_B^-} \|v\|_{2, G}^{p_B^{-}} \to -\infty, \quad \text{as }t\to \infty.
\end{equation}Hence, we conclude following the lines of the proof of Theorem \ref{eigenvalue}.
\end{proof}

\begin{remark}
Under the assumption \eqref{hyp b 2}, we have
$$\dfrac{B(t)}{G(t)}\leq \dfrac{t^{p_B^{-}}}{t^{p^{+}}}\to 0 \quad \text{as }t\to 0,$$and
$$\dfrac{G(t)}{B(t)}\leq \dfrac{t^{p^{+}}}{t^{p_B^{-}}}\to 0  \quad \text{as }t\to \infty.$$
Thus, in Theorem \ref{eigenvalue3}, we have $B\ll G$ near $0$ and $G \ll B$ near $\infty$.
\end{remark}

\section*{Acknowledgments}
This work was partially supported by CONICET PIP 11220210100238CO and
ANPCyT PICT 2019-03837. P. Ochoa and A. Silva are members of CONICET.

\end{document}